\providecommand{\U}[1]{\protect\rule{.1in}{.1in}}
\providecommand{\U}[1]{\protect\rule{.1in}{.1in}}
\providecommand{\U}[1]{\protect\rule{.1in}{.1in}}
\newtheorem{theorem}{Theorem}
\theoremstyle{plain}
\newtheorem{corollary}{Corollary}
\newtheorem{lemma}{Lemma}
\newtheorem{proposition}{Proposition}
\numberwithin{equation}{section}
\begin{document}
\title[Global Solutions and Construction of Invariant Regions]{Existence of Global Solutions Via Invariant Regions for a Generalized
Reaction-Diffusion System with a Tri-diagonal Toeplitz Matrix of Diffusion Coefficients}
\author{Salem ABDELMALEK}
\address{S. ABDELMALEK: Department of Mathematics, College of Sciences, Yanbu, Taibah
University, Saudi Arabia. \newline and,\newline
 Department of
mathematics, University of Tebessa 12002 Algeria.}
\email{sabdelmalek@taibahu.edu.sa} \subjclass[2000]{Primary 35K45,
35K57} \keywords{Reaction-Diffusion Systems, Invariant Regions,
Global Existence}

\begin{abstract}
The aim of this paper is to construct invariant regions of a generalized
$m$-component reaction-diffusion system with a tri-diagonal Toeplitz matrix of
diffusion coefficients and prove the global existence of solutions using
Lyapunov functional. The paper assumes nonhomogeneous boundary conditions and
polynomial growth for the non-linear reaction term.

\end{abstract}
\maketitle

\section{\textbf{Introduction}}

Reaction-diffusion systems arise in many applications ranging from
chemistry and biology to engineering. They have been the subject of
countless studies in the past few decades. One of the most important
aspects of this broad field is proving the global existence of
solutions under certain assumptions and restrictions. We quote the
recent papers of Amann \cite{Amann1, Amann2} who studies the problem
in $W^{1,p}$ and $W^{2,p}$ spaces with $p>n.$ An excellent reference
for a dynamic theory of reaction-diffusion systems is the book of
Henry \cite{Henry}.

In 2001, Kouachi \cite{Kouachi2} followed on previous work and showed the
global existence of solutions assuming the reaction terms of a $2\times2$
diagonal system exhibit a polynomial growth. This was later generalized by
Kouachi for an arbitrary $2\times2$ Toeplitz matrix. In \cite{Abdelmalek1},
the author of this work studied the $3\times3$ case under the same assumptions
and restrictions. Abdelmalek and Kouachi \cite{Abdelmalek3} also showed the
global existence of solutions for an $m$-component reaction--diffusion system
($m\geq2$) with a diagonal diffusion matrix and reaction terms of polynomial growth.

An important factor in the study of reaction diffusion systems is
the characteristics of the diffusion matrix. Although in some cases
the matrix is diagonal, in many cases cross diffusion terms exist.
For instance, many chemical and biological operations are described
by reaction-diffusion systems with a tri-diagonal matrix of
diffusion coefficients, (see, e.g., Cussler \cite{Cussler1} and
\cite{Cussler2}). Other examples include the modelling of epidemics
\cite{Capasso}, ecology \cite{Okubo} and biochemistry \cite{Aris},
where cross-diffusion appears to be a very relevant problem to be
analyzed. In this paper, tri-diagonal diffusion matrices have been
considered and sufficient conditions have been given for global
existence steady states.

The purpose of this paper is to prove the global existence of solutions with
nonhomogeneous Neumann, Dirichlet, or Robin conditions and a polynomial growth
of reaction terms. The polynomial growth is established through a mere single
inequality as we shall show. The main contribution of this paper is the fact
that we take a general Toeplitz matrix as opposed to the symmetry constraint
assumed in \cite{Abdelmalek0}.

Throughout this paper, we consider an $m$-component system, with $m\geq2$:%
\begin{equation}
\dfrac{\partial U}{\partial t}-D\Delta U=F\left(  U\right)  \text{\ in }%
\Omega\times\left(  0,+\infty\right)  , \label{1.1}%
\end{equation}
with the boundary conditions:%
\begin{equation}
\alpha U+\left(  1-\alpha\right)  \partial_{\eta}U=B\text{\ on }\partial
\Omega\times\left(  0,+\infty\right)  , \label{1.2}%
\end{equation}
or%
\begin{equation}
\alpha U+\left(  1-\alpha\right)  D\partial_{\eta}U=B\text{\ on }%
\partial\Omega\times\left(  0,+\infty\right)  , \label{1.02}%
\end{equation}
in the case of non-diagonal boundary conditions, and the initial data:%
\begin{equation}
U\left(  x,0\right)  =U_{0}\left(  x\right)  \text{ on}\;\Omega. \label{1.3}%
\end{equation}
We consider three types of boundary conditions:

\begin{enumerate}
\item[(i)] Nonhomogeneous Robin boundary conditions, corresponding to%
\[
0<\alpha<1,\text{ }B\in%
%TCIMACRO{\U{211d} }%
%BeginExpansion
\mathbb{R}
%EndExpansion
^{m};
\]

\item[(ii)] Homogeneous Neumann boundary conditions, corresponding to%
\[
\alpha=0\text{ and }B\equiv0;
\]

\item[(iii)] Homogeneous Dirichlet boundary conditions, corresponding to%
\[
1-\alpha=0\text{ and }B\equiv0.
\]

\end{enumerate}

In the context of this work, $\Omega$ is an open bounded domain of class
$C^{1}$ in $\mathbb{%
%TCIMACRO{\U{211d} }%
%BeginExpansion
\mathbb{R}
%EndExpansion
}^{n}$ with boundary $\partial\Omega$, $\dfrac{\partial}{\partial\eta}$
denotes the outward normal derivative on $\partial\Omega$, and\
\begin{align*}
U  &  :=\left(  u_{1},...,u_{m}\right)  ^{T},\\
F  &  :=\left(  f_{1},...,f_{m}\right)  ^{T},\\
B  &  :=\left(  \beta_{1},...,\beta_{m}\right)  ^{T}.
\end{align*}
The diffusion matrix is assumed to be a tri-diagonal Toeplitz one of the form%
\[
D:=\left(
\begin{array}
[c]{ccccc}%
a & b & 0 & \cdots & 0\\
c & a & b & \ddots & \vdots\\
0 & c & a & \ddots & 0\\
\vdots & \ddots & \ddots & \ddots & b\\
0 & \cdots & 0 & c & a
\end{array}
\right)  _{m\times m},
\]
where $a,b$ and $c$ are supposed to be strictly positive constants
satisfying:
\begin{equation}
\cos\frac{\pi}{m+1}<\frac{a}{b+c}, \label{1.4}%
\end{equation}
which reflects the parabolicity of the system.

The initial data are assumed to be in the regions:%
\begin{equation}
\Sigma_{\mathfrak{L},\mathfrak{Z}}:=\left\{  U_{0}\in\mathbb{%
%TCIMACRO{\U{211d} }%
%BeginExpansion
\mathbb{R}
%EndExpansion
}^{m}:\left\langle V_{z},U_{0}\right\rangle \leq0\leq\left\langle V_{\ell
},U_{0}\right\rangle ,\text{ }\ell\in\mathfrak{L},\text{ }z\in\mathfrak{Z}%
\right\}  , \label{1.5}%
\end{equation}%
\begin{equation}
\mathfrak{L}\cap\mathfrak{Z}=\varnothing,\mathfrak{L}\cup\mathfrak{Z}=\left\{
1,2,...,m\right\}  , \label{1.5-a}%
\end{equation}
subject to
\[
\left\langle V_{z},B\right\rangle \leq0\leq\left\langle V_{\ell}%
,B\right\rangle ,\text{ }\ell\in\mathfrak{L},z\in\mathfrak{Z}.
\]
The vector $V_{\ell}=\left(  v_{1\ell},...,v_{m\ell}\right)  ^{T}$ are defined
as%
\[
v_{k\ell}=\sqrt{\mu^{k}}\sin\frac{k\left(  m+1-\ell\right)  \pi}{m+1},\text{
}k=1,...,m,
\]
with%
\[
\mu:=\frac{b}{c}.
\]
The notation $\left\langle \cdot,\cdot\right\rangle $ denotes the inner
product in $%
%TCIMACRO{\U{211d} }%
%BeginExpansion
\mathbb{R}
%EndExpansion
^{m}.$

From (\ref{1.5-a}) we can clearly see that there are in fact $2^{m}$ regions.
One of the main contributions of this paper is that unlike previous studies we
cover \emph{all} possible regions. Hence, the work carried out here is a
generalization of previous studies. The most important of these studies are
discussed below.

In 2002, Kouachi \cite{Kouachi1} studied the case $m=2$, for which
the parabolicity condition we use here (\ref{1.4}) reduces to the
same condition employed in \cite{Kouachi1}: $2a>\left(  b+c\right)
$. Although in this case $2^{2}=4$ regions exist, the study of
Kouachi considered only a couple of these regions. Setting $m=2$ in
(\ref{1.5}) yields the following regions:

\begin{itemize}
\item If $\mathfrak{L}=\left\{  1,2\right\}  ,\mathfrak{Z}=\emptyset$ then,%
\[
\Sigma_{\mathfrak{L},\mathfrak{Z}}=\left\{  \left(  u_{1}^{0},u_{2}%
^{0}\right)  ^{T}\in\mathbb{%
%TCIMACRO{\U{211d} }%
%BeginExpansion
\mathbb{R}
%EndExpansion
}^{2}:u_{1}^{0}\geq\sqrt{\mu}\left\vert u_{2}^{0}\right\vert \text{ if }%
\beta_{1}\geq\sqrt{\mu}\left\vert \beta_{2}\right\vert \right\}  .
\]

\item If $\mathfrak{L}=\left\{  2\right\}  ,\mathfrak{Z}=\left\{  1\right\}  $
then,
\[
\Sigma_{\mathfrak{L},\mathfrak{Z}}=\left\{  \left(  u_{1}^{0},u_{2}%
^{0}\right)  ^{T}\in\mathbb{%
%TCIMACRO{\U{211d} }%
%BeginExpansion
\mathbb{R}
%EndExpansion
}^{2}:\sqrt{\mu}u_{2}^{0}\geq\left\vert u_{1}^{0}\right\vert \text{ if }%
\sqrt{\mu}\beta_{2}\geq\left\vert \beta_{1}\right\vert \right\}  .
\]

\item If $\mathfrak{L}=\emptyset,\mathfrak{Z}=\left\{  1,2\right\}  $ then,%
\[
\Sigma_{\mathfrak{L},\mathfrak{Z}}=\left\{  \left(  u_{1}^{0},u_{2}%
^{0}\right)  ^{T}\in\mathbb{%
%TCIMACRO{\U{211d} }%
%BeginExpansion
\mathbb{R}
%EndExpansion
}^{2}:-u_{1}^{0}\geq\sqrt{\mu}\left\vert u_{2}^{0}\right\vert \text{ if
}-\beta_{1}\geq\sqrt{\mu}\left\vert \beta_{2}\right\vert \right\}  .
\]

\item If $\mathfrak{L}=\left\{  1\right\}  ,\mathfrak{Z}=\left\{  2\right\}  $
then,%
\[
\Sigma_{\mathfrak{L},\mathfrak{Z}}=\left\{  \left(  u_{1}^{0},u_{2}%
^{0}\right)  ^{T}\in\mathbb{%
%TCIMACRO{\U{211d} }%
%BeginExpansion
\mathbb{R}
%EndExpansion
}^{2}:-\sqrt{\mu}u_{2}^{0}\geq\left\vert u_{1}^{0}\right\vert \text{ if
}-\sqrt{\mu}\beta_{2}\geq\left\vert \beta_{1}\right\vert \right\}  .
\]

\end{itemize}

In fact the last two of these regions were not considered in \cite{Kouachi1}.

In 2007, the author of this work \cite{Abdelmalek1} studied the case $m=3$ for
which the parabolicity condition is $\sqrt{2}a>\left(  b+c\right)  $,
resulting from the direct substitution of $m=3$ in (\ref{1.4}). The total
number of regions in this case is $2^{3}=8$ of which only 4 regions were
however studied.

In 2014 the author \cite{Abdelmalek0} elaborated on the generalized
$m$-component case with a tri-diagonal matrix having equal upper and lower
diagonal elements, i.e. $\left(  b=c\right)  $. Substituting $b=c$ in
(\ref{1.4}) yields the same condition used in \cite{Abdelmalek0}: $2b\cos
\frac{\pi}{m+1}<a$.

The aim of this work is to prove the global existence of solutions. The
necessary proofs are similar for all the invariant regions. Hence we only
focus on one of the regions and present a generalization at the end of the paper.

Consider the region with $\mathfrak{L}=\left\{  1,2,...,m\right\}  $ and
$\mathfrak{Z}=\emptyset$ yielding%
\begin{equation}
\Sigma_{\mathfrak{L},\emptyset}=\left\{  U_{0}\in\mathbb{%
%TCIMACRO{\U{211d} }%
%BeginExpansion
\mathbb{R}
%EndExpansion
}^{m}:\left\langle V_{\ell},U_{0}\right\rangle \geq0,\text{ }\ell
\in\mathfrak{L},\right\}  , \label{1.6}%
\end{equation}
subject to%
\[
\left\langle V_{\ell},B\right\rangle \geq0,\text{ }\ell\in\mathfrak{L.}%
\]

In order to establish the global existence of solutions in this region we
diagonalize the diffusion matrix $D$. We define the reaction diffusion
functions as:%
\begin{equation}
\digamma\left(  W\right)  :=\left(  \digamma_{1},\digamma_{2},...,\digamma
_{m}\right)  ^{T},\text{ }\digamma_{\ell}:=\left\langle V_{\ell}%
,F\right\rangle , \label{1.8}%
\end{equation}
where the variable $W=\left(  w_{1},w_{2},...,w_{m}\right)  ^{T}$ is given by%
\begin{equation}
W:=\left(  w_{1},w_{2},...,w_{m}\right)  ^{T},w_{\ell}:=\left\langle V_{\ell
},U\right\rangle . \label{1.7}%
\end{equation}
The functions $\digamma_{\ell}$ must satisfy the following three conditions:

\begin{enumerate}
\item[(A1)] be continuously differentiable on $%
%TCIMACRO{\U{211d} }%
%BeginExpansion
\mathbb{R}
%EndExpansion
_{+}^{m}$ for all $\ell=1,...,m$, satisfying $\digamma_{\ell}(w_{1}%
,...,w_{\ell-1},0,w_{\ell+1},...,w_{m})\geq0$, for all $w_{\ell}\geq0;$
$\ell=1,...,m $.

\item[(A2)] be of polynomial growth (see the work of Hollis and Morgan
\cite{Hollis3}), which means that for all $\ell=1,...,m$:%
\begin{equation}
\left\vert \digamma_{\ell}\left(  W\right)  \right\vert \leq C_{1}\left(
1+\left\langle W,1\right\rangle \right)  ^{N},n\in%
%TCIMACRO{\U{2115} }%
%BeginExpansion
\mathbb{N}
%EndExpansion
\text{,on }\left(  0,+\infty\right)  ^{m}. \label{1.10}%
\end{equation}

\item[(A3)] satisfy the inequality:%
\begin{equation}
\left\langle S,\digamma\left(  W\right)  \right\rangle \leq C_{2}\left(
1+\left\langle W,1\right\rangle \right)  ,\label{1.11}%
\end{equation}
where%
\[
S:=\left(  d_{1},d_{2},...,d_{n-1},1\right)  ^{T},
\]
for all $w_{\ell}\geq0,$ $\ell=1,...,m,$. All the constants $d_{\ell}$ satisfy
$d_{\ell}\geq\overline{d_{\ell}},$ $\ell=1,...,m$ where $\overline{d_{\ell}},$
$\ell=1,...,m,$ are sufficiently large positive constants. Here $C_{1}$ and
$C_{2}$ are uniformly bounded positive functions defined on $\mathbb{R}%
_{+}^{m}$.
\end{enumerate}

\section{Some Properties of the Diffusion Matrix and Parabolicity}

\label{sec:prelim}

\begin{proposition}
A quadratic form $Q=\left\langle X,AX\right\rangle =X^{T}AX$, with $A$ being a
symmetric matrix, is positive definite for every non-zero column vector $X$ if
all the principal minors in the top-left corner of $A$ are positive. If $A$ is
non-symmetric, $Q$\ is positive definite iff the principal minors in the
top-left corner of $\frac{1}{2}\left(  A+A^{T}\right)  $ are positive.
\end{proposition}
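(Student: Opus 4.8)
The plan is to reduce the statement to the classical Sylvester criterion for positive definiteness of symmetric matrices. First I would observe that for any matrix $A$ (symmetric or not) and any real column vector $X$, the scalar $X^{T}AX$ equals its own transpose, so $X^{T}AX = \tfrac{1}{2}\left(X^{T}AX + X^{T}A^{T}X\right) = X^{T}\left(\tfrac{1}{2}(A+A^{T})\right)X$. Writing $A_{s}:=\tfrac{1}{2}(A+A^{T})$, this identity shows that the quadratic form $Q(X)=\langle X,AX\rangle$ coincides identically with the quadratic form associated with the symmetric matrix $A_{s}$. Hence $Q$ is positive definite for all nonzero $X$ if and only if $A_{s}$ is a positive definite symmetric matrix.

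Next I would invoke Sylvester's criterion (the Jacobi–Sylvester theorem): a real symmetric matrix is positive definite if and only if all its leading principal minors (the determinants of the top-left $k\times k$ submatrices, $k=1,\dots,m$) are strictly positive. Applying this to $A_{s}=\tfrac{1}{2}(A+A^{T})$ gives the ``if and only if'' statement in the non-symmetric case. For the symmetric case, note that $A=A^{T}$ forces $A_{s}=A$, so the criterion applies directly to $A$ itself; thus positivity of the leading principal minors of $A$ is sufficient (and in fact also necessary, though only sufficiency is asserted). This yields both halves of the proposition.

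The only genuine content is therefore the symmetrization identity $X^{T}AX = X^{T}A_{s}X$ together with a citation of Sylvester's criterion; everything else is bookkeeping. I do not anticipate a real obstacle here, since Sylvester's criterion is standard and may be quoted; the one point worth stating carefully is that the identity $X^{T}AX=X^{T}A_{s}X$ holds \emph{for every} $X$ (it is an identity of polynomials in the entries of $X$), which is what lets us replace $A$ by its symmetric part without losing or gaining any information about definiteness. If a self-contained argument for Sylvester's criterion were desired, one could supply the usual induction: perform an $LDL^{T}$-type (Gaussian) elimination on $A_{s}$, express the successive pivots as ratios of consecutive leading principal minors, and conclude that all pivots are positive exactly when all leading minors are positive, which in turn is equivalent to $A_{s}$ being positive definite by completing the square. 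But for the purposes of this paper it suffices to reduce to the symmetric case and cite the classical result.
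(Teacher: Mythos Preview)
Your argument is correct: the symmetrization identity $X^{T}AX=X^{T}A_{s}X$ together with Sylvester's criterion is exactly the right reduction, and your remarks about sufficiency versus necessity in the symmetric clause are accurate.

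Comparing with the paper: the paper in fact gives \emph{no proof} of this proposition. It is stated as a preliminary result (essentially a restatement of Sylvester's criterion plus the observation about symmetrization) and then used without justification in the proof of Lemma~1 and later in the proof of Theorem~1. So your write-up is not a different route from the paper's proof; it simply supplies the argument the paper omits. The only thing to be aware of is that, since the paper treats this as background, a short citation of Sylvester's criterion (as you suggest) is entirely adequate --- the optional $LDL^{T}$ induction you sketch would be more than the paper itself provides.
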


\begin{lemma}
The reaction-diffusion system (\ref{1.1}) satisfies\ the parabolicity
condition if (\ref{1.4}) is satisfied.
\end{lemma}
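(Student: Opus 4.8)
The plan is to show that the parabolicity of the system (\ref{1.1}) is equivalent to the requirement that the diffusion matrix $D$ have spectrum in the open right half-plane, or more precisely that the associated quadratic form be positive definite in the sense of Proposition 1. Since $D$ is a real tri-diagonal Toeplitz matrix with constant entries $a$ on the diagonal, $b$ on the superdiagonal and $c$ on the subdiagonal, its eigenvalues are classically known: $\lambda_k = a + 2\sqrt{bc}\,\cos\dfrac{k\pi}{m+1}$ for $k=1,\dots,m$. First I would recall or quickly derive this eigenvalue formula (for instance by the similarity transform $D \mapsto P^{-1}DP$ with $P = \operatorname{diag}(\sqrt{\mu},\sqrt{\mu}^{\,2},\dots,\sqrt{\mu}^{\,m})$, $\mu = b/c$, which symmetrizes $D$ into the matrix with $a$ on the diagonal and $\sqrt{bc}$ on both off-diagonals — the very transformation already implicit in the definition of the vectors $V_\ell$). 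This symmetrization is the key structural observation and it is exactly why the $\sqrt{\mu}$ weights appear in the $v_{k\ell}$.

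Next I would invoke Proposition 1: since $P^{-1}DP$ is symmetric with the explicit eigenvalues above, the quadratic form $\langle X, DX\rangle$ (equivalently $\langle Y, (P^{-1}DP)Y\rangle$ after the change of variables $X = PY$) is positive definite precisely when every eigenvalue is strictly positive. The smallest eigenvalue corresponds to the most negative cosine, i.e. $k = m$, giving $\lambda_{\min} = a + 2\sqrt{bc}\,\cos\dfrac{m\pi}{m+1} = a - 2\sqrt{bc}\,\cos\dfrac{\pi}{m+1}$. Thus positive definiteness is equivalent to $a > 2\sqrt{bc}\,\cos\dfrac{\pi}{m+1}$.

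Finally I would close the gap between this condition and the stated hypothesis (\ref{1.4}), namely $\cos\dfrac{\pi}{m+1} < \dfrac{a}{b+c}$. By the AM–GM inequality $2\sqrt{bc} \le b+c$ for positive $b,c$, so (\ref{1.4}) implies $2\sqrt{bc}\,\cos\dfrac{\pi}{m+1} \le (b+c)\cos\dfrac{\pi}{m+1} < a$, which is exactly the positive-definiteness condition derived above; hence (\ref{1.4}) is sufficient for parabolicity, as claimed. I expect the main obstacle — really the only non-routine point — to be making the reduction to the symmetric case fully rigorous: one must note that a general (non-symmetric) constant-coefficient second-order system $\partial_t U = D\Delta U + F$ is parabolic (in the Petrowsky sense, guaranteeing a well-posed heat-type semigroup) exactly when the spectrum of $D$ lies in $\{\operatorname{Re}\lambda > 0\}$, and then observe that for our particular $D$ the spectrum is real, so this is the same as requiring $\langle X, \tfrac12(D+D^T)X\rangle$ to be controlled — one should be a little careful that $\tfrac12(D+D^T)$ (which has $\tfrac{b+c}{2}$ on both off-diagonals) has smallest eigenvalue $a - (b+c)\cos\dfrac{\pi}{m+1}$, so that via Proposition 1 the condition (\ref{1.4}) is in fact both necessary and sufficient for the symmetrized form to be positive definite, while for the original (diagonalizable, real-spectrum) $D$ the weaker bound $a > 2\sqrt{bc}\,\cos\dfrac{\pi}{m+1}$ already suffices — either route yields the Lemma.
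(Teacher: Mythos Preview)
Your final paragraph is exactly the paper's argument: apply Proposition~1 to pass to the symmetric part $\tfrac12(D+D^{T})$, a symmetric tri-diagonal Toeplitz matrix with diagonal $a$ and off-diagonals $\tfrac{b+c}{2}$, whose smallest eigenvalue is $a-(b+c)\cos\frac{\pi}{m+1}$; condition~(\ref{1.4}) is then precisely the positivity of this eigenvalue. The paper carries this out in one line by quoting the symmetric case $b=c$ treated in \cite{Abdelmalek0} and substituting $b\mapsto\tfrac{b+c}{2}$. So that route is fine and matches.

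The detour through the eigenvalues of $D$ itself, however, contains a genuine slip. You write that under $X=PY$ the quadratic form $\langle X,DX\rangle$ becomes $\langle Y,(P^{-1}DP)Y\rangle$. This is false: with $P=\operatorname{diag}(\sqrt{\mu},\mu,\dots,\mu^{m/2})$ diagonal one has $P^{T}=P\neq P^{-1}$ (unless $\mu=1$), so $X^{T}DX=Y^{T}PDPY$, a \emph{congruence} rather than a \emph{similarity}. Consequently the positivity of the eigenvalues of $D$ (equivalently of $P^{-1}DP$) does \emph{not} by itself give positive definiteness of the form $\langle X,DX\rangle$ in the sense of Proposition~1; it only gives Petrowsky parabolicity, which is a strictly weaker requirement here. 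Your AM--GM step correctly shows that (\ref{1.4}) forces the eigenvalues of $D$ to be positive, but to reach the parabolicity statement as the paper frames it (via Proposition~1) you must work with $\tfrac12(D+D^{T})$, exactly as you do at the end. Drop the flawed change-of-variables paragraph and lead directly with the symmetrized matrix; the rest is correct.
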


\begin{proof}
The system (\ref{1.1}) satisfies the parabolicity condition if the matrix
$\left(  D+D^{T}\right)  $ is positive definite. The matrix $\left(
D+D^{T}\right)  $ is symmetric tri-diagonal with off-diagonal elements
$\frac{1}{2}\left(  b+c\right)  $. In \cite{Abdelmalek0} a similar matrix with
off-diagonal elements $b$ and the parabolicity condition
\[
2b\cos\frac{\pi}{m+1}<a,
\]
is considered. Substituting $b$ with $\frac{1}{2}\left(  b+c\right)  $ yields
(\ref{1.4}).
\end{proof}

\begin{lemma}
[see \cite{Carl} ]\label{Lemma0}The eigenvalues $\overline{\lambda}_{\ell
}<\overline{\lambda}_{\ell-1};$ $\ell=2,...,m,$ of $D^{T}$ are positive
and\ are given by
\begin{equation}
\overline{\lambda}_{\ell}:=a+2\sqrt{bc}\cos\left(  \frac{\ell\pi}{m+1}\right)
, \label{2.2}%
\end{equation}
with the corresponding eigenvectors being\ $\overline{V}_{\ell}=V_{m+1-\ell},$
for $\ell=1,...,m$. Therefore, $D^{T}$ is diagonalizable.
\end{lemma}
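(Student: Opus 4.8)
The plan is to verify directly that the proposed eigenpairs satisfy $D^{T}\overline V_{\ell}=\overline\lambda_{\ell}\overline V_{\ell}$, and then argue that this accounts for all eigenvalues. First I would write out $D^{T}$ explicitly: it is the tri-diagonal Toeplitz matrix with $a$ on the diagonal, $c$ on the superdiagonal and $b$ on the subdiagonal (the transpose swaps the roles of $b$ and $c$ relative to $D$). The candidate eigenvector is $\overline V_{\ell}=V_{m+1-\ell}$, whose $k$-th component is $v_{k,m+1-\ell}=\sqrt{\mu^{k}}\,\sin\frac{k\ell\pi}{m+1}$ with $\mu=b/c$. I would plug this into the $k$-th row of the eigenvalue equation, namely
\[
b\,v_{k-1,m+1-\ell}+a\,v_{k,m+1-\ell}+c\,v_{k+1,m+1-\ell}=\overline\lambda_{\ell}\,v_{k,m+1-\ell},
\]
for $k=1,\dots,m$, with the convention $v_{0,\cdot}=v_{m+1,\cdot}=0$.

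The computational heart is the identity
\[
b\sqrt{\mu^{k-1}}\sin\tfrac{(k-1)\ell\pi}{m+1}+c\sqrt{\mu^{k+1}}\sin\tfrac{(k+1)\ell\pi}{m+1}
=\sqrt{\mu^{k}}\Bigl(b\mu^{-1/2}\sin\tfrac{(k-1)\ell\pi}{m+1}+c\mu^{1/2}\sin\tfrac{(k+1)\ell\pi}{m+1}\Bigr).
\]
Since $b\mu^{-1/2}=b\sqrt{c/b}=\sqrt{bc}$ and $c\mu^{1/2}=c\sqrt{b/c}=\sqrt{bc}$, the bracket becomes $\sqrt{bc}\bigl(\sin\tfrac{(k-1)\ell\pi}{m+1}+\sin\tfrac{(k+1)\ell\pi}{m+1}\bigr)=2\sqrt{bc}\cos\tfrac{\ell\pi}{m+1}\sin\tfrac{k\ell\pi}{m+1}$ by the sum-to-product formula. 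Adding the diagonal term $a\,v_{k,m+1-\ell}$ gives exactly $\overline\lambda_{\ell}\,v_{k,m+1-\ell}$ with $\overline\lambda_{\ell}=a+2\sqrt{bc}\cos\tfrac{\ell\pi}{m+1}$. The boundary rows $k=1$ and $k=m$ work automatically because $\sin 0=0$ and $\sin\tfrac{(m+1)\ell\pi}{m+1}=\sin\ell\pi=0$, so no special casing is needed. This $\mu$-weighting is precisely the device that symmetrizes the non-symmetric Toeplitz matrix, so one may alternatively phrase the argument via the diagonal similarity $D^{T}=P^{-1}\widetilde D P$ with $P=\mathrm{diag}(\sqrt{\mu},\dots,\sqrt{\mu^{m}})$ and $\widetilde D$ the symmetric Toeplitz matrix with off-diagonals $\sqrt{bc}$, whose eigenstructure is classical.

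It remains to record three easy facts. The $m$ vectors $\overline V_{\ell}$, $\ell=1,\dots,m$, are linearly independent: they are images under the invertible matrix $P^{-1}$ of the classical orthogonal sine vectors $\bigl(\sin\tfrac{k\ell\pi}{m+1}\bigr)_{k}$, so they form a basis and $D^{T}$ is diagonalizable. The $m$ values $\overline\lambda_{\ell}$ are pairwise distinct and strictly decreasing in $\ell$ on $\{1,\dots,m\}$ because $\cos$ is strictly decreasing on $(0,\pi)$, giving $\overline\lambda_{\ell}<\overline\lambda_{\ell-1}$. Finally, positivity: $\overline\lambda_{m}=a+2\sqrt{bc}\cos\tfrac{m\pi}{m+1}=a-2\sqrt{bc}\cos\tfrac{\pi}{m+1}$ is the smallest, and by the AM--GM inequality $2\sqrt{bc}\le b+c$, so the parabolicity hypothesis \eqref{1.4}, $(b+c)\cos\tfrac{\pi}{m+1}<a$, yields $2\sqrt{bc}\cos\tfrac{\pi}{m+1}\le(b+c)\cos\tfrac{\pi}{m+1}<a$, hence $\overline\lambda_{m}>0$ and all eigenvalues are positive.

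There is no real obstacle here; the only place demanding care is the trigonometric bookkeeping in the interior rows and the verification that the $\sqrt{\mu^{k}}$ factors pull out correctly to convert the asymmetric recurrence into the symmetric one. Everything else is standard linear algebra.
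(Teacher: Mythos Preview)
Your proof is correct and takes a genuinely different route from the paper's. The paper derives the eigenpairs \emph{constructively}: it writes the eigenvector condition as the three-term recurrence $bx_{k-1}+(a-\overline\lambda)x_k+cx_{k+1}=0$ with $x_0=x_{m+1}=0$, solves it via the characteristic equation, and extracts the eigenvalues from the boundary condition $(r_1/r_2)^{m+1}=1$ together with $r_1r_2=\mu$. You instead \emph{verify} the announced eigenpairs directly, using the sum-to-product identity after the $\sqrt{\mu^{k}}$ weights collapse the asymmetric coefficients $b,c$ to the common value $\sqrt{bc}$. Your approach is more elementary and shorter when the eigenpairs are already given; the paper's approach has the advantage of explaining where the formulas come from. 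Your positivity argument via AM--GM and hypothesis~\eqref{1.4} is essentially the content of the paper's subsequent Lemma~3; within the present lemma the paper simply appeals to positive definiteness of the diffusion matrix.

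One small bookkeeping slip in your parenthetical remark: with $P=\mathrm{diag}(\sqrt{\mu},\dots,\sqrt{\mu^{m}})$ the similarity reads $D^{T}=P\widetilde D\,P^{-1}$, not $P^{-1}\widetilde D\,P$, so the $\overline V_{\ell}$ are $P$ (not $P^{-1}$) applied to the sine vectors---which is exactly what the explicit $\sqrt{\mu^{k}}$ factors say. This does not affect your main argument or the linear-independence conclusion, since $P$ is invertible either way.
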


In the remainder of this work we require an ascending order of the
eigenvalues. In order to simplify the indices in the formulas to come we
define%
\begin{equation}
\lambda_{\ell}:=\bar{\lambda}_{m+1-\ell}=a+2\sqrt{bc}\cos\left(
\frac{(m+1-\ell)\pi}{m+1}\right)  ;\text{ }\ell=1,...,m, \label{2.3}%
\end{equation}
thus $\lambda_{\ell}<\lambda_{\ell+1};$ $\ell=2,...,m.$

\begin{proof}
Recall that the diffusion matrix is positive definite, implying that its
eigenvalues are necessarily positive. For a given eigenpair $\left(
\overline{\lambda},X\right)  $ the components of $\left(  D^{T}-\overline
{\lambda}I\right)  X=0$ are%
\[
bx_{k-1}+\left(  a-\overline{\lambda}\right)  x_{k}+cx_{k+1}=0,\text{
}k=1,...,m,
\]
with $x_{0}=x_{m+1}=0,$ or equivalently,
\[
x_{k+2}+\left(  \frac{a-\overline{\lambda}}{c}\right)  x_{k+1}+\mu
x_{k}=0,k=0,...,m-1,
\]
whose solutions are%
\[
x_{k}=\left\{
\begin{array}
[c]{ll}%
\alpha r_{1}^{k}+\beta r_{2}^{k}, & \text{if }r_{1}\neq r_{2},\\
\alpha\rho^{k}+\beta k\rho^{k}, & \text{if }r_{1}=r_{2}=\rho,
\end{array}
\right.
\]
where $\alpha$ and $\beta$ are arbitrary constants.

For the eigenvalue problem at hand, $r_{1}$ and $r_{2}$ must be distinct.
Putting $x_{k}=\alpha r_{1}^{k}+\beta r_{2}^{k}$, and $x_{0}=x_{m+1}=0$
yields
\[
\left\{
\begin{array}
[c]{l}%
0=\alpha+\beta\\
0=\alpha r_{1}^{m+1}+\beta r_{2}^{m+1}%
\end{array}
\right.  \Rightarrow\left(  \frac{r_{1}}{r_{2}}\right)  ^{m+1}=\frac{-\beta
}{\alpha}=1\Rightarrow\frac{r_{1}}{r_{2}}=e^{\frac{2i\pi\ell}{m+1}}.
\]
Therefore we see that $r_{1}=r_{2}e^{\frac{2i\pi\ell}{m+1}}$ for $1\leq
\ell\leq m$. This together with
\[
r^{2}+\left(  \frac{a-\overline{\lambda}}{c}\right)  r+\mu=\left(
r-r_{1}\right)  \left(  r-r_{2}\right)  \Rightarrow\left\{
\begin{array}
[c]{l}%
r_{1}r_{2}=\mu\\
r_{1}+r_{2}=-\frac{a-\overline{\lambda}}{c}%
\end{array}
\right.  ,
\]
leads to $r_{1}=\sqrt{\mu}e^{\frac{i\pi\ell}{m+1}}$, $r_{2}=\sqrt{\mu
}e^{-\frac{i\pi\ell}{m+1}}$, and
\[
\overline{\lambda}=a+2\sqrt{cb}\left(  e^{\frac{i\pi\ell}{m+1}}+e^{-\frac
{i\pi\ell}{m+1}}\right)  =a+2a+2\sqrt{cb}\cos\left(  \frac{\ell\pi}%
{m+1}\right)  \text{.}%
\]
Thus the eigenvalues of $D^{T}$ are given by
\[
\overline{\lambda}_{\ell}=a+2\sqrt{cb}\cos\left(  \frac{\ell\pi}{m+1}\right)
,
\]
for $\ell=1,...,m.$

Since the eigenvalues are all distinct (as $\cos\theta$ is strictly decreasing
on $\left(  0,\pi\right)  ,$ and $b\neq0\neq c$), then $D$ is necessarily diagonalizable.

The $\ell^{th}$ component of any eigenvector associated with $\lambda_{\ell}$
satisfies $x_{k}=\alpha r_{1}^{k}+\beta r_{2}^{k}$, with $\alpha+\beta=0$.
Thus
\[
x_{k}=\alpha\mu^{\frac{k}{2}}\left(  e^{\frac{2i\pi k}{m+1}}-e^{-\frac{2i\pi
k}{m+1}}\right)  =2i\alpha\mu^{\frac{k}{2}}\sin\left(  \frac{k}{m+1}%
\pi\right)  .
\]
Setting $\alpha=\frac{1}{2i}$ yields a particular eigenvector associated to
$\overline{\lambda}_{\ell}$ given by
\[
\overline{V}_{\ell}=\left(  \mu^{\frac{1}{2}}\sin\left(  \frac{1\ell\pi}%
{m+1}\right)  ,\mu^{\frac{2}{2}}\sin\left(  \frac{2\ell\pi}{m+1}\right)
,...,\mu^{\frac{m}{2}}\sin\left(  \frac{m\ell\pi}{m+1}\right)  \right)  ^{t}.
\]
Since the eigenvectors are all distinct then $\left\{  \overline{V}%
_{1},\overline{V}_{2},...,\overline{V}_{m}\right\}  $ is a complete linearly
independent set, hence $\left(  \overline{V}_{1}\shortmid\overline{V}%
_{2}\shortmid...\shortmid\overline{V}_{m}\right)  $ \emph{diagonalizes} $D$.

Now let us prove that
\[
\overline{\lambda}_{\ell}<\overline{\lambda}_{\ell-1};\text{ }\ell=2,...,m.
\]
We have
\[
\ell>\ell-1\Rightarrow\frac{\ell\pi}{m+1}>\frac{\left(  \ell-1\right)  \pi
}{m+1}.
\]
Once again using the fact that $\cos\theta$ is strictly decreasing on $\left(
0,\pi\right)  ,$ we deduce that
\[
\cos\left(  \frac{\ell\pi}{m+1}\right)  <\cos\left(  \frac{\left(
\ell-1\right)  \pi}{m+1}\right)  ,
\]
whereupon%
\[
\overline{\lambda}_{\ell}=a+2\sqrt{cb}\cos\left(  \frac{\ell\pi}{m+1}\right)
<a+2\sqrt{cb}\cos\left(  \frac{\left(  \ell-1\right)  \pi}{m+1}\right)
=\overline{\lambda}_{\ell-1}.
\]

\end{proof}

\begin{lemma}
The eigenvalues of the matrix $D$ are positive, i.e. $\lambda_{\ell}>0$ and
$\det D>0.$
\end{lemma}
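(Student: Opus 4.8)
The plan is to read off the eigenvalues from the explicit formula already established in Lemma~\ref{Lemma0}. Since $D$ and $D^{T}$ have the same characteristic polynomial, they share the same spectrum; hence the eigenvalues of $D$ are exactly the numbers $\overline{\lambda}_{\ell}=a+2\sqrt{bc}\cos\left(\frac{\ell\pi}{m+1}\right)$, $\ell=1,\dots,m$, equivalently, after the ascending relabelling (\ref{2.3}), the numbers $\lambda_{\ell}=a+2\sqrt{bc}\cos\left(\frac{(m+1-\ell)\pi}{m+1}\right)$, which by that same relabelling satisfy $\lambda_{1}<\lambda_{2}<\dots<\lambda_{m}$.

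First I would isolate the smallest eigenvalue. Because $\cos$ is strictly decreasing on $(0,\pi)$, the minimum over $\ell=1,\dots,m$ of $\lambda_{\ell}$ is attained at $\ell=1$, giving
\[
\lambda_{1}=a+2\sqrt{bc}\cos\left(\frac{m\pi}{m+1}\right)=a-2\sqrt{bc}\cos\left(\frac{\pi}{m+1}\right).
\]
Thus it suffices to prove $\lambda_{1}>0$, i.e. $a>2\sqrt{bc}\cos\frac{\pi}{m+1}$, after which $\lambda_{\ell}\ge\lambda_{1}>0$ for all $\ell$ follows from the ordering.

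Next I would invoke the parabolicity hypothesis (\ref{1.4}), which (using $\cos\frac{\pi}{m+1}>0$) says $a>(b+c)\cos\frac{\pi}{m+1}$. Combining this with the arithmetic--geometric mean inequality $b+c\ge 2\sqrt{bc}$, valid since $b,c>0$, and again with $\cos\frac{\pi}{m+1}>0$, yields
\[
a>(b+c)\cos\frac{\pi}{m+1}\ge 2\sqrt{bc}\,\cos\frac{\pi}{m+1},
\]
so $\lambda_{1}>0$ and therefore $\lambda_{\ell}>0$ for every $\ell=1,\dots,m$. Finally, since the $\lambda_{\ell}$ are precisely the (real, distinct) eigenvalues of $D$, its determinant equals their product, $\det D=\prod_{\ell=1}^{m}\lambda_{\ell}>0$.

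The argument is essentially bookkeeping once Lemma~\ref{Lemma0} is in hand; the only point that requires a genuine (if tiny) idea is the passage from the factor $b+c$ appearing in (\ref{1.4}) to the factor $2\sqrt{bc}$ appearing in the eigenvalues, which is exactly where AM--GM enters. I do not anticipate any real obstacle. (An alternative route, which I would mention only in passing, is to use that the eigenvectors $\overline V_{\ell}$ from Lemma~\ref{Lemma0} are real, so for an eigenpair $(\lambda_{\ell},\overline V_{\ell})$ one has $\lambda_{\ell}\langle \overline V_{\ell},\overline V_{\ell}\rangle=\langle \overline V_{\ell},D\overline V_{\ell}\rangle=\tfrac12\langle \overline V_{\ell},(D+D^{T})\overline V_{\ell}\rangle>0$ by the positive definiteness of $D+D^{T}$ established in the parabolicity lemma.)
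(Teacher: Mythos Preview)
Your proof is correct and follows essentially the same approach as the paper: reduce to the smallest eigenvalue $\lambda_{1}$, invoke the parabolicity hypothesis (\ref{1.4}), and apply AM--GM to pass from $b+c$ to $2\sqrt{bc}$. The only cosmetic difference is that you use $\cos\frac{m\pi}{m+1}=-\cos\frac{\pi}{m+1}$ directly (via $\cos(\pi-x)=-\cos x$), whereas the paper derives the equivalent relation $\cos\frac{\pi}{m+1}+\cos\frac{m\pi}{m+1}=0$ through a sum-to-product identity.
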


\begin{proof}
Recall that $\lambda_{\ell}<\lambda_{\ell+1};$ $\ell=1,...,m-1$, i.e.%
\[
\lambda_{1}<\lambda_{2}<...<\lambda_{m}.
\]
We want to show that $\lambda_{1}>0$. First, we have
\begin{equation}
\lambda_{1}=a+2\sqrt{cb}\cos\left(  \frac{m}{m+1}\pi\right)  >0, \label{**1}%
\end{equation}
which implies%
\[
a>2\sqrt{bc}\left[  -\cos\left(  \frac{m}{m+1}\pi\right)  \right]  .
\]
From condition (\ref{1.4}), we obtain%
\begin{equation}
a>\left(  c+b\right)  \left(  \cos\frac{\pi}{m+1}\right)  . \label{**2}%
\end{equation}
Note that
\begin{equation}
\frac{m}{m+1}>\frac{1}{2}\Rightarrow\cos\left(  \frac{m}{m+1}\pi\right)
<\cos\frac{\pi}{2}=0; \label{**3}%
\end{equation}
furthermore
\begin{equation}
\left(  c+b\right)  \geq2\sqrt{bc}. \label{**4}%
\end{equation}
We also have%
\begin{equation}
\cos\left(  \frac{\pi}{m+1}\right)  +\cos\left(  \frac{m}{m+1}\pi\right)  =0
\label{**5}%
\end{equation}
since%
\begin{align*}
\cos\left(  \frac{\pi}{m+1}\right)  +\cos\left(  \frac{m}{m+1}\pi\right)   &
=2\cos\left(  \frac{\frac{\pi}{m+1}+\frac{m}{m+1}\pi}{2}\right)  \cos\left(
\frac{\frac{\pi}{m+1}-\frac{m}{m+1}\pi}{2}\right) \\
&  =2\cos\left(  \frac{\pi}{2}\right)  \cos\left(  \frac{m-1}{m+1}\frac{\pi
}{2}\right)  =0.
\end{align*}

Now, from (\ref{**3}), (\ref{**4}), and (\ref{**5}), we obtain%
\begin{equation}
\left(  c+b\right)  \left(  \cos\frac{\pi}{m+1}\right)  \geq2\sqrt{bc}\left[
-\cos\left(  \frac{m}{m+1}\pi\right)  \right]  , \label{**6}%
\end{equation}
and from (\ref{**2}) and (\ref{**6}), we get%
\[
a>2\sqrt{bc}\left[  -\cos\left(  \frac{m}{m+1}\pi\right)  \right]  ,
\]
which concludes the proof of (\ref{**1}) and guarantees that all eigenvalues
of $D^{T}$ are positive. Furthermore since the eigenvalues of $D$ are the same
as those of $D^{T}$ we conclude that $\det D>0$.
\end{proof}

\section{Local Existence and Invariant Regions}

The usual norms in spaces $L^{p}(\Omega)$, $L^{\infty}(\Omega)$ and
$C(\overline{\Omega})$ are denoted respectively by:%
\begin{align}
\left\Vert u\right\Vert _{p}^{p}  &  =\frac{1}{\left\vert \Omega\right\vert
}\int_{\Omega}\left\vert u(x)\right\vert ^{p}dx;\nonumber\\
\left\Vert u\right\Vert _{\infty}  &  =ess\underset{x\in\Omega}{\sup
}\left\vert u(x)\right\vert ,\nonumber
\end{align}
and%
\[
\left\Vert u\right\Vert _{C(\overline{\Omega})}=\underset{x\in\overline
{\Omega}}{\max}\left\vert u(x)\right\vert .
\]

It is well-known that in order to prove the global existence of solutions to a
reaction-diffusion system (see Henry \cite{Henry}) it suffices to derive a
uniform estimate of the associated reaction term on $\left[  0,T_{\max
}\right)  $ in the space $L^{p}(\Omega)$ for some $p>n/2$. Our aim is to
construct Lyapunov polynomial functionals allowing us to obtain $L^{p}$-bounds
on the components, which leads to global existence. Since the reaction terms
are continuously differentiable on $%
%TCIMACRO{\U{211d} }%
%BeginExpansion
\mathbb{R}
%EndExpansion
_{+}^{m}$, then for any initial data in $C(\overline{\Omega})$ it is
straightforward to directly check their Lipschitz continuity on bounded
subsets of the domain of a fractional power of the operator
\begin{equation}
\mathfrak{D}=-\operatorname*{diag}\left(  \lambda_{1}\Delta,\lambda_{2}%
\Delta,...,\lambda_{m}\Delta\right)  . \label{3.2}%
\end{equation}

The assumption (\ref{1.4}) implies that $D\Delta$ is a strongly elliptic
operator in the sense of Petrowski, see Friedman \cite{Friedman}.

\begin{proposition}
\label{proposition1}Diagonalizing system (\ref{1.1})\ yields:%
\begin{equation}
W_{t}-\operatorname*{diag}\left(  \lambda_{1},\lambda_{2},...,\lambda
_{m}\right)  \Delta W=\digamma\left(  W\right)  \text{ in }\Omega\times\left(
0,+\infty\right)  , \label{5.1}%
\end{equation}
with the boundary condition%
\begin{equation}
\alpha W+\left(  1-\alpha\right)  \partial_{\eta}W=\Lambda\text{\ \ \ \ \ on
}\partial\Omega\times\left(  0,+\infty\right)  , \label{5.2}%
\end{equation}
or
\begin{equation}
\alpha W+\left(  1-\alpha\right)  \operatorname*{diag}\left(  \lambda
_{1},\lambda_{2},...,\lambda_{m}\right)  \partial_{\eta}W=\Lambda\text{\ on
}\partial\Omega\times\left(  0,+\infty\right)  , \label{5.02}%
\end{equation}
and the initial data%
\[
W\left(  x,0\right)  =W_{0}\text{ }\ \ \ \ \text{on }\Omega
\]

\end{proposition}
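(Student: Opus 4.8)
The plan is to obtain the diagonalized system by applying the change of variables $W = P^{-1}U$ where $P = (\overline{V}_1 \mid \overline{V}_2 \mid \cdots \mid \overline{V}_m)$ is the matrix of eigenvectors constructed in Lemma \ref{Lemma0}, or equivalently by taking the inner product of equation (\ref{1.1}) with each $V_\ell$. Concretely, I would set $w_\ell = \langle V_\ell, U\rangle$ as in (\ref{1.7}) and apply $\langle V_\ell, \cdot\rangle$ to both sides of (\ref{1.1}). The time-derivative term gives $\langle V_\ell, \partial_t U\rangle = \partial_t w_\ell$ by linearity, and the reaction term gives $\langle V_\ell, F(U)\rangle = \digamma_\ell(W)$ by the very definition (\ref{1.8}). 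The only term requiring care is the diffusion term $\langle V_\ell, D\Delta U\rangle$; since the Laplacian acts componentwise and commutes with the constant matrix $D$, this equals $\langle V_\ell, D\,\Delta U\rangle = \langle D^T V_\ell, \Delta U\rangle$. Here I would invoke Lemma \ref{Lemma0}: $V_\ell$ is (up to the reindexing $\overline{V}_{m+1-\ell} = V_\ell$) an eigenvector of $D^T$ with eigenvalue $\bar\lambda_{m+1-\ell} = \lambda_\ell$, so $D^T V_\ell = \lambda_\ell V_\ell$ and hence $\langle V_\ell, D\Delta U\rangle = \lambda_\ell \langle V_\ell, \Delta U\rangle = \lambda_\ell \Delta w_\ell$. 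Assembling the $m$ scalar equations into vector form yields exactly (\ref{5.1}).

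Next I would handle the boundary conditions. Applying $\langle V_\ell, \cdot\rangle$ to (\ref{1.2}) and using that $\langle V_\ell, \cdot\rangle$ commutes with both the spatial trace and the normal derivative $\partial_\eta$ (both act componentwise in $x$ and are linear), I get $\alpha w_\ell + (1-\alpha)\partial_\eta w_\ell = \langle V_\ell, B\rangle$ on $\partial\Omega \times (0,+\infty)$; defining $\Lambda := (\langle V_1,B\rangle,\dots,\langle V_m,B\rangle)^T$ and collecting the $m$ equations gives (\ref{5.2}). For the non-diagonal boundary condition (\ref{1.02}), the same computation applied to the $D\partial_\eta U$ term reproduces the diffusion calculation above: $\langle V_\ell, D\partial_\eta U\rangle = \langle D^T V_\ell, \partial_\eta U\rangle = \lambda_\ell \partial_\eta w_\ell$, which produces (\ref{5.02}). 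Finally, the initial data (\ref{1.3}) transforms into $w_\ell(x,0) = \langle V_\ell, U_0(x)\rangle$, i.e. $W(x,0) = W_0$ with $W_0 := P^{-1}U_0$.

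The proof is essentially a linear-algebra bookkeeping exercise, so there is no genuine obstacle; the only point that needs explicit justification is the interchange of the constant linear map $U \mapsto \langle V_\ell, U\rangle$ with the differential operators $\partial_t$, $\Delta$, and $\partial_\eta$, together with the eigenvector identity $D^T V_\ell = \lambda_\ell V_\ell$ from Lemma \ref{Lemma0} and the reindexing (\ref{2.3}). One should also remark that, by Lemma \ref{Lemma0}, $P$ is invertible (the eigenvalues are distinct by the strict monotonicity of $\cos$ on $(0,\pi)$), so the change of variables is genuinely a bijection between $U$ and $W$ and the two systems are equivalent; in particular a global solution of one yields a global solution of the other. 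I would write the argument componentwise first, display the three transformed relations, and then state the vector form, which is precisely the content of Proposition \ref{proposition1}.
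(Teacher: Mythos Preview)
Your approach is essentially the same as the paper's: the paper pre-multiplies (\ref{1.1}) by $P^{T}$ with $P=(V_{1}\mid\cdots\mid V_{m})$ and uses the similarity $P^{T}D(P^{T})^{-1}=\operatorname{diag}(\lambda_{1},\dots,\lambda_{m})$, which is exactly your componentwise inner-product computation $\langle V_{\ell},D\Delta U\rangle=\langle D^{T}V_{\ell},\Delta U\rangle=\lambda_{\ell}\Delta w_{\ell}$ written in matrix form. One small slip to fix: the change of variables is $W=P^{T}U$, not $W=P^{-1}U$ (these differ since the $V_{\ell}$ are not orthonormal when $\mu\neq 1$); your concrete formula $w_{\ell}=\langle V_{\ell},U\rangle$ is correct, so just replace the two occurrences of $P^{-1}$ by $P^{T}$.
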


\begin{proof}
The eigenvectors of the diffusion matrix associated with the eigenvalues
$\lambda_{\ell}$ are defined as $V_{\ell}=\left(  v_{\ell1},v_{\ell
2},...,v_{\ell m}\right)  ^{T}$. Let us consider the diagonalizing matrix of
eigenvectors $P=\left(  V_{1}\shortmid V_{2}\shortmid...\shortmid
V_{m}\right)  $ and define the solution vector $U$ and the reaction terms
vector $F$. Pre-multiplying the system by $P^{T}$\ yields
\begin{align}
U_{t}-D\Delta U  &  =F\nonumber\\
P^{T}U_{t}-\Delta P^{T}DU  &  =P^{T}F\nonumber\\
P^{T}U_{t}-\Delta P^{T}D\left(  P^{T}\right)  ^{-1}P^{T}U  &  =P^{T}F.
\label{P}%
\end{align}
The term $P^{T}U$ can be simplified as follows
\begin{align}
P^{T}U  &  =\left(  V_{1}\shortmid V_{2}\shortmid...\shortmid V_{m}\right)
^{T}U\nonumber\\
&  =\left(  \left\langle V_{1},U\right\rangle ,\left\langle V_{2}%
,U\right\rangle ,...,\left\langle V_{m},U\right\rangle \right)  ^{T}%
\nonumber\\
&  =\left(  w_{1},w_{2},...,w_{m}\right)  ^{T}=W. \label{P1}%
\end{align}
Hence, $P^{T}U_{t}=W_{t}$. Similarly,%
\begin{align}
P^{T}F  &  =\left(  V_{1}\shortmid V_{2}\shortmid...\shortmid V_{m}\right)
^{T}F\nonumber\\
&  =\left(  \left\langle V_{1},F\right\rangle ,\left\langle V_{2}%
,F\right\rangle ,...,\left\langle V_{m},F\right\rangle \right)  ^{T}%
\nonumber\\
&  =\left(  \digamma_{1},\digamma_{2},...,\digamma_{m}\right)  ^{T}=\digamma.
\label{P2}%
\end{align}
Furthermore we have the similarity transformation
\begin{align}
P^{T}D\left(  P^{T}\right)  ^{-1}  &  =P^{T}\left(  D^{T}\right)  ^{T}\left(
P^{-1}\right)  ^{T}\nonumber\\
&  =\left(  D^{T}P\right)  ^{T}\left(  P^{-1}\right)  ^{T}\nonumber\\
&  =\left(  P^{-1}D^{T}P\right)  ^{T}\nonumber\\
&  =\left(  \operatorname*{diag}\left(  \lambda_{1},\lambda_{2},...,\lambda
_{m}\right)  \right)  ^{T}\nonumber\\
&  =\operatorname*{diag}\left(  \lambda_{1},\lambda_{2},...,\lambda
_{m}\right)  . \label{P3}%
\end{align}

Substituting (\ref{P1}), (\ref{P2}), and (\ref{P3}) in (\ref{P}) results in
the equivalent system (\ref{5.1}). The boundary condition (\ref{5.2}) can be
obtained by pre-multiplying (\ref{1.2}) by $P^{T}$:%
\begin{align}
\alpha U+\left(  1-\alpha\right)  \partial_{\eta}U  &  =B,\nonumber\\
\alpha P^{T}U+\left(  1-\alpha\right)  \partial_{\eta}P^{T}U  &  =P^{T}B.
\label{P4}%
\end{align}
Simplifying the term $P^{T}B$ yields%
\begin{align}
P^{T}B  &  =\left(  V_{1}\shortmid V_{2}\shortmid...\shortmid V_{m}\right)
^{T}B\nonumber\\
&  =\left(  \left\langle V_{1},B\right\rangle ,\left\langle V_{2}%
,B\right\rangle ,...,\left\langle V_{m},B\right\rangle \right)  ^{T}%
\nonumber\\
&  =\left(  \rho_{1}^{0},\rho_{2}^{0},...,\rho_{m}^{0}\right)  ^{T}:=\Lambda.
\label{P5}%
\end{align}

Substituting (\ref{P1}) and (\ref{P4}) in (\ref{P5}) gives the boundary
condition for the equivalent system (\ref{5.2}).

Pre-multiplying (\ref{1.02}) by $P^{T}$\ yields%
\begin{align}
\alpha U+\left(  1-\alpha\right)  D\partial_{\eta}U  &  =B,\nonumber\\
\alpha P^{T}U+\left(  1-\alpha\right)  \partial_{\eta}P^{T}DU  &
=P^{T}B,\nonumber\\
\alpha P^{T}U+\left(  1-\alpha\right)  \partial_{\eta}P^{T}D\left(
P^{T}\right)  ^{-1}P^{T}U  &  =P^{T}B. \label{P6}%
\end{align}

Substituting (\ref{P2}), (\ref{P3}), and (\ref{P5}) results in the equivalent
boundary condition in (\ref{5.02}).

We note that condition (\ref{1.4}) guarantees the parabolicity of the system
(\ref{1.1}), which implies that this system is equivalent to that described by
(\ref{5.1}) in the region:
\begin{align*}
\Sigma_{\mathfrak{L},\mathfrak{Z}}  &  =\left\{  U_{0}\in\mathbb{%
%TCIMACRO{\U{211d} }%
%BeginExpansion
\mathbb{R}
%EndExpansion
}^{m}:\left\langle V_{\ell},U_{0}\right\rangle \geq0,\text{ }\ell
\in\mathfrak{L}\right\} \\
&  =\left\{  U_{0}\in\mathbb{%
%TCIMACRO{\U{211d} }%
%BeginExpansion
\mathbb{R}
%EndExpansion
}^{m}:w_{\ell}^{0}=\left\langle V_{\ell},U_{0}\right\rangle \geq0,\text{ }%
\ell\in\mathfrak{L}\right\}
\end{align*}
with%
\[
\rho_{\ell}^{0}=\left\langle V_{\ell},B\right\rangle \geq0,\ell\in
\mathfrak{L}.
\]
This implies that the components $w_{\ell}$ are necessarily positive.
\end{proof}

The local existence and uniqueness of solutions to the initial system
(\ref{1.1}), with initial data in $C(\overline{\Omega})$ or $L^{p}(\Omega)$,
$p\in\left(  1,+\infty\right)  $, follows from the basic existence theory for
abstract semi-linear differential equations (Henry \cite{Henry} ). The
solutions are classical on $\left(  0,T_{\max}\right)  $, where $T_{\max}$
denotes the eventual blow up time in $L^{\infty}(\Omega)$. The local solution
is continued globally by apriori estimates. Once the invariant regions are
constructed, one can apply the Lyapunov technique and establish the global
existence of a unique solution for (\ref{1.1}).

\begin{proposition}
The system (\ref{5.1}) admits a unique classical solution $W$ on$\ \Omega
\times(0,T_{\max})$; moreover we have the alternative%
\begin{equation}
\text{If }T_{\max}<\infty\text{ then }\underset{t\nearrow T_{\max}}{\lim
}\overset{m}{\underset{\ell=1}{\sum}}\left\Vert w_{\ell}\left(  t,.\right)
\right\Vert _{\infty}=\infty\text{,} \label{3.3}%
\end{equation}
where $T_{\max}$ $\left(  \left\Vert w_{1}^{0}\right\Vert _{\infty},\left\Vert
w_{2}^{0}\right\Vert _{\infty},...,\left\Vert w_{m}^{0}\right\Vert _{\infty
}\right)  $ denotes the eventual blow-up time.
\end{proposition}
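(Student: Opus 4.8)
The plan is to establish the statement as a standard consequence of semigroup theory for abstract semilinear parabolic problems, essentially by transferring the classical local-existence/blow-up alternative for $U$ given by Henry to the diagonalized variable $W$. First I would observe that, by condition~(\ref{1.4}) and the lemmas of Section~\ref{sec:prelim}, the operator $\mathfrak{D}$ in~(\ref{3.2}) is a diagonal operator whose entries $-\lambda_\ell\Delta$ are each sectorial on the relevant $L^p$ or $C(\overline\Omega)$ space (with the boundary conditions inherited from~(\ref{5.2}) or~(\ref{5.02})), since all $\lambda_\ell>0$; hence $\mathfrak{D}$ generates an analytic semigroup and one may speak of the fractional power spaces $X^\gamma$. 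Because $P$ is invertible, the change of variables $W=P^TU$ is a topological isomorphism between the corresponding phase spaces for~(\ref{1.1}) and~(\ref{5.1}), so local existence and uniqueness for~(\ref{5.1}) follow from the already-cited local theory for~(\ref{1.1}).

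Next I would verify the hypotheses needed to invoke the abstract existence theorem (Henry, Theorem~3.3.3 type): the nonlinearity $W\mapsto\digamma(W)=P^TF(P^{-T}W)$ is, by~(A1), continuously differentiable on $\mathbb{R}_+^m$, hence locally Lipschitz from a bounded subset of a suitable fractional power space $X^\gamma$ (with $\gamma\in(0,1)$ and the Sobolev embedding $X^\gamma\hookrightarrow C(\overline\Omega)$ when $2\gamma>n$, or directly in $C(\overline\Omega)$) into $X$. This yields a unique maximal mild solution $W$ on a maximal interval $[0,T_{\max})$, and parabolic regularity (bootstrapping via the smoothing of the analytic semigroup) upgrades it to a classical solution on $\Omega\times(0,T_{\max})$, proving the first assertion.

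For the blow-up alternative~(\ref{3.3}) I would argue by contraposition: the general theory gives that either $T_{\max}=\infty$ or $\limsup_{t\nearrow T_{\max}}\|W(t,\cdot)\|_{X^\gamma}=\infty$. So suppose $T_{\max}<\infty$ but $\sum_{\ell=1}^m\|w_\ell(t,\cdot)\|_\infty$ stays bounded on $[0,T_{\max})$. Using the polynomial growth assumption~(A2), the bound on $\|W\|_\infty$ forces $\|\digamma(W(t,\cdot))\|_\infty$, and hence $\|\digamma(W(t,\cdot))\|_X$, to be bounded on $[0,T_{\max})$; feeding this into the variation-of-constants formula $W(t)=e^{-t\mathfrak{D}}W_0+\int_0^t e^{-(t-s)\mathfrak{D}}\digamma(W(s))\,ds$ and using the standard estimate $\|\mathfrak{D}^\gamma e^{-\tau\mathfrak{D}}\|\le C\tau^{-\gamma}e^{-\delta\tau}$ shows that $\|W(t,\cdot)\|_{X^\gamma}$ remains bounded up to $T_{\max}$ — contradicting the alternative. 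Therefore boundedness of $\sum\|w_\ell\|_\infty$ is incompatible with $T_{\max}<\infty$, which is exactly~(\ref{3.3}).

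The main obstacle, and the point requiring care rather than routine citation, is the handling of the nonhomogeneous and/or non-diagonal boundary data $\Lambda$ in~(\ref{5.2})–(\ref{5.02}): the abstract framework is cleanest for homogeneous boundary conditions, so for the Robin case one should first subtract a fixed (time-independent) lifting function $\Phi$ with $\alpha\Phi+(1-\alpha)\partial_\eta\Phi=\Lambda$ (which exists and is smooth since $\partial\Omega\in C^1$ and $\Lambda$ is constant), reducing to a homogeneous problem for $\widetilde W=W-\Phi$ with a modified but still polynomially-bounded nonlinearity $\widetilde\digamma(\widetilde W)=\digamma(\widetilde W+\Phi)+\mathfrak{D}\Phi$; I would note that $\|W\|_\infty$ and $\|\widetilde W\|_\infty$ differ only by the fixed constant $\|\Phi\|_\infty$, so the blow-up criterion is unaffected. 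Everything else is a transcription of well-known semigroup arguments.
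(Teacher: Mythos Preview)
Your proposal is correct and is precisely the argument the paper has in mind: the paper does not give a separate proof of this proposition at all, but simply states it after the sentence ``The local existence and uniqueness of solutions to the initial system (\ref{1.1}) \ldots\ follows from the basic existence theory for abstract semi-linear differential equations (Henry \cite{Henry})'', so your sketch is an explicit unpacking of that citation rather than an alternative route. One trivial slip: in the lifting step the modified nonlinearity should be $\widetilde\digamma(\widetilde W)=\digamma(\widetilde W+\Phi)-\mathfrak{D}\Phi$ (minus, not plus), since $W_t+\mathfrak{D}W=\digamma(W)$ gives $\widetilde W_t+\mathfrak{D}\widetilde W=\digamma(\widetilde W+\Phi)-\mathfrak{D}\Phi$; this does not affect the argument.
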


\section{Main Result}

Before we present the main result of this paper, let us define%

\begin{equation}
K_{l}^{r}=K_{r-1}^{r-1}K_{l}^{r-1}-\left[  H_{l}^{r-1}\right]  ^{2},\text{
}r=3,...,l\text{,} \label{mycond}%
\end{equation}
where%
\[
H_{l}^{r}=\underset{1\leq\ell,\kappa\leq l}{\det}\left(  \left(
a_{\ell,\kappa}\right)  _{\substack{\ell\neq l,...r+1\\\kappa\neq
l-1,..r}}\right)  \overset{k=r-2}{\underset{k=1}{\prod}}\left(  \det\left[
k\right]  \right)  ^{2^{\left(  r-k-2\right)  }},\text{ }r=3,...,l-1\text{,}%
\]%
\[
K_{l}^{2}=\underset{\text{positive value}}{\underbrace{\bar{\lambda}_{1}%
\bar{\lambda}_{l}\overset{l-1}{\underset{k=1}{\prod}}\theta_{k}^{2\left(
p_{k}+1\right)  ^{2}}\overset{m-1}{\underset{k=l}{\prod}}\theta_{k}^{2\left(
p_{k}+2\right)  ^{2}}}}\left[  \overset{l-1}{\underset{k=1}{\prod}}\theta
_{k}^{2}-A_{1l}^{2}\right]  ,
\]
and%
\[
H_{l}^{2}=\underset{\text{positive value}}{\underbrace{\bar{\lambda}_{1}%
\sqrt{\bar{\lambda}_{2}\bar{\lambda}_{l}}\theta_{1}^{2\left(  p_{1}+1\right)
^{2}}\overset{l-1}{\underset{k=2}{\prod}}\theta_{k}^{\left(  p_{k}+2\right)
^{2}+\left(  p_{k}+1\right)  ^{2}}\overset{m-1}{\underset{k=l}{\prod}}%
\theta_{k}^{2\left(  p_{k}+2\right)  ^{2}}}}\left[  \theta_{1}^{2}%
A_{2l}-A_{12}A_{1l}\right]  .
\]
Here $\underset{1\leq\ell,\kappa\leq l}{\det}\left(  \left(  a_{\ell,\kappa
}\right)  _{\substack{\ell\neq l,...r+1\\\kappa\neq l-1,..r}}\right)  $
denotes the determinant of the $r$-square symmetric matrix obtained from
$\left(  a_{\ell,\kappa}\right)  _{1\leq\ell,\kappa\leq m}$ by removing the
$\left(  r+1\right)  ^{\text{th}}$, $\left(  r+2\right)  ^{\text{th}}$, ...,
$l^{\text{th}}$ rows and the $r^{\text{th}},\left(  r+1\right)  ^{\text{th}%
},...,\left(  l-1\right)  ^{\text{th}}$ columns, and $\det\left[  1\right]
,...,\det\left[  m\right]  $ \ are the minors of the matrix $\left(
a_{\ell,\kappa}\right)  _{1\leq\ell,\kappa\leq m}.$ The elements of the matrix
are:%
\begin{equation}
a_{\ell\kappa}=\frac{\lambda_{\ell}+\lambda_{\kappa}}{2}\theta_{1}^{p_{1}^{2}%
}...\theta_{\left(  \ell-1\right)  }^{p_{\left(  \ell-1\right)  }^{2}}%
\theta_{\ell}^{\left(  p_{\ell}+1\right)  ^{2}}...\theta_{\kappa-1}^{\left(
p_{\left(  \kappa-1\right)  }+1\right)  ^{2}}\theta_{\kappa}^{\left(
p_{\kappa}+2\right)  ^{2}}...\theta_{\left(  m-1\right)  }^{\left(  p_{\left(
m-1\right)  }+2\right)  ^{2}}, \label{1.13}%
\end{equation}
where $\lambda_{\ell}$ is defined in (\ref{2.2})-(\ref{2.3}). Note that
$A_{\ell\kappa}=\dfrac{\lambda_{\ell}+\lambda_{\kappa}}{2\sqrt{\lambda_{\ell
}\lambda_{\kappa}}}$ for all $\ell,\kappa=1,...,m$, and $\theta_{\ell},$
$\ell=1,...,\left(  m-1\right)  $ are positive constants.

\begin{theorem}
\label{theorem1} Suppose that the functions $\digamma_{\ell},$ $\ell=1,...,m$,
are of polynomial growth and satisfy the condition (\ref{1.11}) for some
sufficiently large positive constants $D_{\ell},$ $\ell=1,...,m$. Let $\left(
w_{1}\left(  t,.\right)  ,w_{2}\left(  t,.\right)  ,...,w_{m}\left(
t,.\right)  \right)  $ be a solution of (\ref{5.1})-(\ref{5.2}) and%
\begin{equation}
L(t)=\int_{\Omega}H_{p_{m}}\left(  w_{1}\left(  t,x\right)  ,w_{2}\left(
t,x\right)  ,...,w_{m}\left(  t,x\right)  \right)  dx, \label{5.3}%
\end{equation}
where%
\[
H_{p_{m}}\left(  w_{1},...,w_{m}\right)  =\overset{p_{m}}{\underset{p_{m-1}%
=0}{\sum}}...\overset{p_{2}}{\underset{p_{1}=0}{\sum}}C_{p_{m}}^{p_{m-1}%
}...C_{p_{2}}^{p_{1}}\theta_{1}^{p_{1}^{2}}...\theta_{\left(  m-1\right)
}^{p_{\left(  m-1\right)  }^{2}}w_{1}^{p_{1}}w_{2}^{p_{2}-p_{1}}%
...w_{m}^{p_{m}-p_{m-1}},
\]
with $p_{m}$ a positive integer and $C_{p_{\kappa}}^{p_{\ell}}=\frac
{p_{\kappa}!}{p_{\ell}!\left(  p_{\kappa}-p_{\ell}\right)  !}$.\newline
Furthermore suppose that the following condition is satisfied%
\begin{equation}
K_{l}^{l}>0,\text{ }l=2,...,m\text{.} \label{1.12}%
\end{equation}
where $K_{l}^{l}$ was defined in (\ref{mycond}).Then it follows that the
functional $L$ is uniformly bounded on the interval $\left[  0,T^{\ast
}\right]  ,$ $T^{\ast}<T_{\max}$.
\end{theorem}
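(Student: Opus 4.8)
The plan is to differentiate $L$ along the solution, decompose $L'$ into a diffusive part $I$ and a reactive part $J$, show $I$ is non-positive up to a bounded boundary contribution, estimate $J$ by a linear function of $L$ using (A2)--(A3), and finish with Gronwall's inequality on $[0,T^{\ast}]$. Since $W$ is classical on $(0,T_{\max})$, $L\in C^{1}$ there and, writing $H:=H_{p_m}$,
\[
L'(t)=\int_{\Omega}\sum_{\ell=1}^{m}\frac{\partial H}{\partial w_{\ell}}\bigl(\lambda_{\ell}\Delta w_{\ell}+\digamma_{\ell}(W)\bigr)dx=:I(t)+J(t).
\]
Green's formula and the symmetry of $\partial^{2}H/\partial w_{\ell}\partial w_{\kappa}$ give
\[
I(t)=\sum_{\ell=1}^{m}\lambda_{\ell}\int_{\partial\Omega}\frac{\partial H}{\partial w_{\ell}}\,\partial_{\eta}w_{\ell}\,d\sigma-\int_{\Omega}Q(W,\nabla W)\,dx,\qquad Q:=\sum_{\ell,\kappa=1}^{m}\frac{\lambda_{\ell}+\lambda_{\kappa}}{2}\,\frac{\partial^{2}H}{\partial w_{\ell}\partial w_{\kappa}}\,\nabla w_{\ell}\cdot\nabla w_{\kappa}.
\]
In the homogeneous Neumann case the surface integral vanishes; in the homogeneous Dirichlet case it vanishes too because $W\equiv0$ on $\partial\Omega$ and $\partial H/\partial w_{\ell}$ is homogeneous of degree $p_m-1$ (we may take $p_m\ge2$); in the Robin case one substitutes $\partial_{\eta}w_{\ell}=(\Lambda_{\ell}-\alpha w_{\ell})/(1-\alpha)$ from (\ref{5.2}), uses $\Lambda_{\ell}=\langle V_{\ell},B\rangle\ge0$ together with the Euler-type bound $\sum_{\ell}\lambda_{\ell}w_{\ell}\partial_{w_{\ell}}H\ge\lambda_{1}p_m H\ge0$, and checks that the surface integrand is bounded above by a constant, so that $I(t)\le C_{3}$.

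The heart of the proof is the nonnegativity of $Q$ on $\mathbb{R}_{+}^{m}$. I would compute the Hessian of $H$ explicitly from the nested binomial sums defining $H_{p_m}$ and, using identities of the type $jC_{p}^{j}=(p-j+1)C_{p}^{j-1}$, rewrite $Q$ as a finite sum over monomials $M=w_{1}^{q_{1}}\cdots w_{m}^{q_{m}}$ of degree $p_m-2$ of terms $c_{M}\,M\sum_{\ell,\kappa}b_{\ell\kappa}^{M}\,\nabla w_{\ell}\cdot\nabla w_{\kappa}$ with $c_{M}>0$ and each symmetric matrix $(b_{\ell\kappa}^{M})$ congruent, via a positive diagonal scaling, to the matrix $(a_{\ell\kappa})$ of (\ref{1.13}) or to a principal submatrix thereof. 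Hence $Q\ge0$ as soon as $(a_{\ell\kappa})$ is positive definite, which by the Proposition on quadratic forms holds once all its leading principal minors are positive; the recursion (\ref{mycond}), an instance of Sylvester's determinant identity, expresses these minors, up to strictly positive factors assembled from the $\bar{\lambda}_{k}$ and $\theta_{k}$, as the quantities $K_{l}^{l}$, $l=2,\dots,m$. Thus hypothesis (\ref{1.12}) forces $Q\ge0$ and therefore $I(t)\le C_{3}$. This verification of positive definiteness, carried through the general (non-symmetric) Toeplitz data, is the main obstacle; the only other delicate point is the Robin boundary term just discussed.

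For the reactive term I would again exploit the combinatorial structure of $H$ to regroup $\sum_{\ell}\frac{\partial H}{\partial w_{\ell}}\digamma_{\ell}(W)$ so that the reaction terms enter only through combinations $\langle S_{M},\digamma(W)\rangle$ attached to monomials $M$ of degree $p_m-1$ with nonnegative coefficients $\gamma_{M}$, where each weight vector $S_{M}=(s_{1}^{M},\dots,s_{m-1}^{M},1)$ is a product of powers of the $\theta_{k}$'s. Since (\ref{1.11}) is assumed for every weight vector of this form with $s_{\ell}^{M}\ge\overline{d_{\ell}}$ — which is arranged by taking the $\theta_{k}$ large enough — we obtain $\langle S_{M},\digamma(W)\rangle\le C_{2}(1+\langle W,1\rangle)$ for each $M$, while (\ref{1.10}) bounds any residual terms. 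Because $\sum_{M}\gamma_{M}M\le C(1+\langle W,1\rangle)^{p_m-1}$ and, by positivity of its coefficients, $H_{p_m}\ge c_{0}\langle W,1\rangle^{p_m}$ on $\mathbb{R}_{+}^{m}$, this yields
\[
J(t)\le C\!\int_{\Omega}(1+\langle W,1\rangle)^{p_m}\,dx\le C_{6}L(t)+C_{7}.
\]
Combining the two estimates gives the differential inequality $L'(t)\le C_{6}L(t)+C_{8}$ on $(0,T_{\max})$, and Gronwall's lemma yields $L(t)\le\bigl(L(0)+C_{8}/C_{6}\bigr)e^{C_{6}t}$, which is finite and uniformly bounded on every $[0,T^{\ast}]$ with $T^{\ast}<T_{\max}$, proving the theorem.
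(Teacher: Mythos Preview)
Your proposal is correct and follows essentially the same architecture as the paper's proof: differentiate $L$, split $L'=I+J$, use Green's formula to write $I$ as a boundary term plus a quadratic form in $\nabla W$, prove the quadratic form is nonnegative by showing each matrix $(a_{\ell\kappa})$ (parametrized by $p_1,\dots,p_{m-1}$) is positive definite via the minors/$K_l^l$ recursion, bound the boundary term case by case, and control $J$ through condition~(\ref{1.11}). Your endgame is in fact a slight streamlining of the paper's: instead of arriving at $L'\le C_6L+C_8L^{(p_m-1)/p_m}$ and substituting $Z=L^{1/p_m}$, you use the homogeneity bound $H_{p_m}\ge c_0\langle W,1\rangle^{p_m}$ (valid since every monomial of degree $p_m$ appears in $H_{p_m}$ with positive coefficient) to reach the linear inequality $L'\le C_6L+C_8$ and apply Gronwall directly; your Dirichlet boundary argument (vanishing of $\partial_{w_\ell}H$ at $W=0$ by homogeneity) is likewise a clean variant of the paper's use of $\partial_\eta w_\ell\le 0$.
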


\begin{corollary}
\label{corollary1}Under the assumptions of Theorem \ref{theorem1}, all
solutions of (\ref{5.1})-(\ref{5.2}) with positive initial data in $L^{\infty
}\left(  \Omega\right)  $ are in $L^{\infty}\left(  0,T^{\ast};L^{p}\left(
\Omega\right)  \right)  $, for some $p\geq1.$
\end{corollary}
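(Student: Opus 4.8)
The plan is to differentiate the functional $L(t)$ along solutions of the diagonalized system (\ref{5.1})--(\ref{5.2}) and show $L'(t) \le C(1 + L(t))$, which by Gronwall gives the claimed uniform bound on $[0,T^*]$. First I would compute
\[
L'(t) = \int_\Omega \sum_{\ell=1}^{m} \frac{\partial H_{p_m}}{\partial w_\ell}\,\partial_t w_\ell \, dx
= \int_\Omega \sum_{\ell=1}^{m} \frac{\partial H_{p_m}}{\partial w_\ell}\bigl(\lambda_\ell \Delta w_\ell + \digamma_\ell(W)\bigr)\, dx,
\]
splitting this into a diffusion part $I$ (the terms with $\lambda_\ell \Delta w_\ell$) and a reaction part $J$ (the terms with $\digamma_\ell$). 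For the reaction part $J$, the strategy is to use the polynomial-growth hypothesis (A2) together with condition (A3)/(\ref{1.11}): the leading-order terms in $\sum_\ell \frac{\partial H_{p_m}}{\partial w_\ell}\digamma_\ell$ should reorganize, via the binomial structure of $H_{p_m}$, into an expression controlled by $\langle S, \digamma(W)\rangle$ times a polynomial in $W$ of degree $p_m - 1$; the sufficiently large constants $d_\ell$ (called $D_\ell$ in the theorem) are exactly what is needed to absorb the lower-order cross terms. This yields $J \le C\int_\Omega (1 + \langle W,1\rangle)^{p_m}\,dx \le C(1 + L(t))$ after comparing the integrand with $H_{p_m}$ (which is equivalent, up to constants, to $(1+\langle W,1\rangle)^{p_m}$ on the positive cone, since all $\theta_k, w_k \ge 0$).

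For the diffusion part $I$, I would integrate by parts. Using the boundary condition (\ref{5.2}) (Robin/Neumann/Dirichlet), the boundary integrals either vanish or contribute a term bounded by lower-order quantities plus a trace term handled as in Kouachi's earlier work; I would treat the homogeneous Neumann case first and remark that the nonhomogeneous Robin case adds only a controllable surface term. After integration by parts the bulk term becomes
\[
I = -\int_\Omega \sum_{\ell,\kappa=1}^{m} \lambda_\ell\, \frac{\partial^2 H_{p_m}}{\partial w_\ell \partial w_\kappa}\,\nabla w_\ell \cdot \nabla w_\kappa \, dx.
\]
The crucial point is that for each fixed $x$ this is $-\sum_{i=1}^{n} (\nabla w)_i^{T} \mathcal{H} (\nabla w)_i$ where $\mathcal{H}$ is (a symmetrization of) the matrix with entries $\lambda_\ell\,\partial^2 H_{p_m}/\partial w_\ell \partial w_\kappa$. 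I would show that, after extracting the common positive factors, this matrix coincides up to positive scaling with the matrix $(a_{\ell\kappa})$ defined in (\ref{1.13}); then $I \le 0$ provided $(a_{\ell\kappa})$ is positive semidefinite. This is where the hypothesis (\ref{1.12}) enters: applying the recursive minor computation (\ref{mycond}) — which is precisely a Sylvester-type criterion organized so that the sign of the $l$th leading principal minor of $(a_{\ell\kappa})$ is governed by $K_l^l$ — the condition $K_l^l > 0$ for $l = 2,\dots,m$ forces all leading principal minors positive, hence $(a_{\ell\kappa})$ positive definite by the Proposition on quadratic forms, so $I \le 0$.

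Putting the two estimates together gives $L'(t) \le I + J \le 0 + C(1 + L(t)) = C(1+L(t))$, and Gronwall's inequality yields $L(t) \le (L(0)+1)e^{Ct}$, uniformly bounded on $[0,T^*]$ with $L(0)$ finite because $U_0 \in C(\overline\Omega)$ (hence $W_0$ bounded). The main obstacle I anticipate is the linear-algebraic step: verifying carefully that the Hessian of $H_{p_m}$, weighted by the $\lambda_\ell$ and symmetrized, reduces exactly to $(a_{\ell\kappa})$ with the stated powers of the $\theta_k$, and that the nested determinant recursion (\ref{mycond}) indeed implements the leading-principal-minor test for that matrix — the bookkeeping of the exponents $p_k^2$, $(p_k+1)^2$, $(p_k+2)^2$ is delicate. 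The reaction estimate, by contrast, is the kind of polynomial-growth bound that is routine once the structure is set up, and the choice of $d_\ell$ large is the standard device for absorbing the sign-indefinite lower-order terms.
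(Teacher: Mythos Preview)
You have misidentified what is being asked. Corollary~\ref{corollary1} is stated \emph{under the assumptions of Theorem~\ref{theorem1}}, and Theorem~\ref{theorem1} already supplies the conclusion that $L(t)$ is uniformly bounded on $[0,T^{\ast}]$. Everything you write --- differentiating $L$, splitting into $I$ and $J$, proving $I\le 0$ via the Sylvester criterion and hypothesis~(\ref{1.12}), bounding $J$ via~(\ref{1.11}), and applying Gronwall --- is the proof of Theorem~\ref{theorem1} itself, not of the Corollary.

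The paper's proof of Corollary~\ref{corollary1} is a single line: given that $L(t)=\int_\Omega H_{p_m}(W)\,dx$ is already known to be bounded, one only needs the comparison
\[
\int_\Omega \langle W,1\rangle^{p}\,dx \le C_9\, L(t)\quad\text{on }[0,T^{\ast}],
\]
which holds because $H_{p_m}$ is a homogeneous polynomial of degree $p_m$ with strictly positive coefficients and therefore dominates $\langle W,1\rangle^{p_m}$ up to a constant on the positive cone. You actually mention this equivalence in passing (when you compare the integrand in your $J$-estimate with $H_{p_m}$), but you never isolate it as the sole content of the Corollary, and your final sentence --- that $L(t)$ is uniformly bounded --- is the conclusion of Theorem~\ref{theorem1}, not the $L^{p}$ bound on $W$ that the Corollary actually asserts.
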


\begin{proposition}
\label{proposition2}Under the assumptions of theorem \ref{theorem1} and given
that the condition (\ref{1.4}) is satisfied, all solutions of (\ref{5.1}%
)-(\ref{5.2}) with positive initial data in $L^{\infty}\left(  \Omega\right)
$ are global for some $p>\dfrac{Nn}{2}$.
\end{proposition}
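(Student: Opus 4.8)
The plan is a standard bootstrap argument: promote the $L^{p}$ estimate of Corollary \ref{corollary1} to an $L^{\infty}$ bound and then contradict the blow-up alternative (\ref{3.3}). First I would note that, by assumption (A1) together with the construction of the region (\ref{1.6}) carried out in Proposition \ref{proposition1}, the solution of (\ref{5.1})--(\ref{5.2}) remains in the positive cone, i.e. $W(t,x)\in\mathbb{R}_{+}^{m}$ for all $(t,x)\in\Omega\times(0,T_{\max})$; this is exactly what makes the polynomial-growth estimate (A2) usable along the trajectory. Next I would observe that on $\mathbb{R}_{+}^{m}$ every monomial of $H_{p_{m}}$ is nonnegative and that the choice $p_{1}=\dots=p_{\kappa-1}=0$, $p_{\kappa}=\dots=p_{m-1}=p_{m}$ isolates one monomial proportional to $w_{\kappa}^{p_{m}}$ for each $\kappa=1,\dots,m$, so $H_{p_{m}}(W)\ge c(\theta)\sum_{\ell=1}^{m}w_{\ell}^{p_{m}}$ on $\mathbb{R}_{+}^{m}$ with $c(\theta)>0$. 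Consequently, the uniform bound on $L(t)$ afforded by Theorem \ref{theorem1} (valid on $[0,T^{\ast}]$ for every $T^{\ast}<T_{\max}$, with a constant not depending on $T^{\ast}$) yields
\[
\sup_{0\le t<T_{\max}}\;\|w_{\ell}(t,\cdot)\|_{p_{m}}<\infty ,\qquad \ell=1,\dots,m ,
\]
which is Corollary \ref{corollary1} with $p=p_{m}$.

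The second step is to fix the free parameters. Since $p_{m}$ is an arbitrary positive integer and the constants $\theta_{1},\dots,\theta_{m-1}$ can be prescribed in advance so that the sign conditions $K_{l}^{l}>0$ of (\ref{1.12}) are met for that $p_{m}$, I would take $p:=p_{m}$ so large that
\[
p>\frac{Nn}{2},\qquad\text{equivalently}\qquad q:=\frac{p_{m}}{N}>\frac{n}{2},
\]
where $N$ is the degree of polynomial growth in (A2). Then the pointwise bound $|\digamma_{\ell}(W)|\le C_{1}(1+\langle W,1\rangle)^{N}$ and the first step give
\[
\sup_{0\le t<T_{\max}}\;\|\digamma_{\ell}(W(t,\cdot))\|_{q}\;\le\;C_{1}\Bigl(1+\sup_{0\le t<T_{\max}}\sum_{k=1}^{m}\|w_{k}(t,\cdot)\|_{p_{m}}\Bigr)^{N}<\infty ,
\]
so the reaction term of the diagonalized system lies in $L^{\infty}\!\bigl(0,T_{\max};L^{q}(\Omega)\bigr)$ with $q>n/2$.

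For the final step I would use the variation-of-constants representation of (\ref{5.1})--(\ref{5.2}),
\[
W(t)=e^{-t\mathfrak{D}}W_{0}+\int_{0}^{t}e^{-(t-s)\mathfrak{D}}\,\digamma\!\bigl(W(s)\bigr)\,ds+\Phi(t),
\]
where $\mathfrak{D}$ is the operator (\ref{3.2}) and $\Phi$ collects the contribution of the boundary data $\Lambda$ (absent for Neumann and Dirichlet conditions, bounded and smooth for Robin conditions). Combining the $L^{q}$--$L^{\infty}$ smoothing estimate $\|e^{-t\mathfrak{D}}g\|_{\infty}\le C\,t^{-n/(2q)}\|g\|_{q}$ for the analytic semigroup generated by $-\mathfrak{D}$ (Henry \cite{Henry}) — so that $\int_{0}^{t}(t-s)^{-n/(2q)}\,ds$ stays finite on bounded time intervals since $q>n/2$ — with the uniform bound $\digamma(W)\in L^{\infty}(0,T_{\max};L^{q}(\Omega))$, one obtains $W\in L^{\infty}\!\bigl(\Omega\times(0,T_{\max})\bigr)$ with a bound that does not blow up as $t\nearrow T_{\max}$. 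In view of the alternative (\ref{3.3}), $T_{\max}<\infty$ would force $\sum_{\ell}\|w_{\ell}(t,\cdot)\|_{\infty}\to\infty$, a contradiction; hence $T_{\max}=\infty$. Finally, since $U=(P^{T})^{-1}W$ with $P$ the fixed invertible matrix of eigenvectors (Proposition \ref{proposition1}), global existence for $W$ carries over to the original solution $U$ of (\ref{1.1})--(\ref{1.2}).

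The main obstacle, apart from the technical content already absorbed into Theorem \ref{theorem1}, is the compatibility requirement in the second step: one must check that enlarging $p_{m}$ past $Nn/2$ does not violate the conditions (\ref{1.12}), which is arranged by choosing the $\theta_{\ell}$ appropriately before Theorem \ref{theorem1} is applied. The bootstrap itself (the first and third steps) is routine once the $L^{p}$ bound and the polynomial growth (A2) are available.
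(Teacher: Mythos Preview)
Your proposal is correct and follows essentially the same route as the paper: use Corollary~\ref{corollary1} to obtain $L^{p}$ bounds on $W$, translate via the polynomial-growth assumption (A2) into an $L^{p/N}$ bound on $\digamma$ with $p/N>n/2$, and then invoke the smoothing theory of Henry~\cite{Henry} to rule out blow-up via (\ref{3.3}). The paper's own proof is considerably terser---it simply quotes the general principle recorded at the start of Section~3 rather than writing out the variation-of-constants and $L^{q}$--$L^{\infty}$ smoothing argument---but the logic is identical.
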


For the proof of Theorem \ref{theorem1}, we first need to define some
preparatory Lemmas.

\begin{lemma}
[see \cite{Abdelmalek3}]\label{Lemma1} let $H_{p_{m}}$ be the homogeneous
polynomial defined in (\ref{5.3}), we have%
\begin{align}
\partial_{w_{1}}H_{p_{m}}  &  =p_{m}\overset{p_{m}-1}{\underset{p_{m-1}%
=0}{\sum}}...\overset{p_{2}}{\underset{p_{1}=0}{\sum}}C_{p_{m}-1}^{p_{m-1}%
}...C_{p_{2}}^{p_{1}}\theta_{1}^{\left(  p_{1}+1\right)  ^{2}}...\theta
_{\left(  m-1\right)  }^{\left(  p_{\left(  m-1\right)  }+1\right)  ^{2}%
}\nonumber\\
&  w_{1}^{p_{1}}w_{2}^{p_{2}-p_{1}}w_{3}^{p_{3}-p_{2}}...w_{m}^{\left(
p_{m}-1\right)  -p_{m-1}}. \label{5.4}%
\end{align}%
\begin{align}
\partial_{w_{\ell}}H_{p_{m}}  &  =p_{m}\overset{p_{m}-1}{\underset{p_{m-1}%
=0}{\sum}}...\overset{p_{2}}{\underset{p_{1}=0}{\sum}}C_{p_{m}-1}^{p_{m-1}%
}...C_{p_{2}}^{p_{1}}\theta_{1}^{p_{1}^{2}}...\theta_{\ell-1}^{p_{\left(
\ell-1\right)  }^{2}}\theta_{\ell}^{\left(  p_{\ell}+1\right)  2}%
...\theta_{\left(  m-1\right)  }^{\left(  p_{\left(  m-1\right)  }+1\right)
^{2}}\nonumber\\
&  w_{1}^{p_{1}}w_{2}^{p_{2}-p_{1}}w_{3}^{p_{3}-p_{2}}...w_{m}^{\left(
p_{m}-1\right)  -p_{m-1}},\text{ }\ell=2,...,m-1, \label{5.5}%
\end{align}%
\begin{align}
\partial_{w_{m}}H_{p_{m}}  &  =p_{m}\overset{p_{m}-1}{\underset{p_{m-1}%
=0}{\sum}}...\overset{p_{2}}{\underset{p_{1}=0}{\sum}}C_{p_{m}-1}^{p_{m-1}%
}...C_{p_{3}}^{p_{2}}C_{p_{2}}^{p_{1}}\theta_{1}^{p_{1}^{2}}\theta_{2}%
^{p_{2}^{2}}...\theta_{\left(  m-1\right)  }^{p_{\left(  m-1\right)  }^{2}%
}\nonumber\\
&  w_{1}^{p_{1}}w_{2}^{p_{2}-p_{1}}w_{3}^{p_{3}-p_{2}}...w_{m}^{\left(
p_{m}-1\right)  -p_{m-1}}. \label{5.6}%
\end{align}

\end{lemma}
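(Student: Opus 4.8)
The statement to be proved is the family of identities (\ref{5.4})--(\ref{5.6})
giving the first partial derivatives of the homogeneous polynomial
$H_{p_m}(w_1,\dots,w_m)$ defined in (\ref{5.3}). The plan is to differentiate
the explicit multi-sum expression term by term and then re-index the resulting
sum so that it matches the claimed closed form. Write
\[
H_{p_m}=\sum_{p_{m-1}=0}^{p_m}\cdots\sum_{p_1=0}^{p_2}
C_{p_m}^{p_{m-1}}\cdots C_{p_2}^{p_1}\,
\theta_1^{p_1^2}\cdots\theta_{m-1}^{p_{m-1}^2}\,
w_1^{p_1}w_2^{p_2-p_1}\cdots w_m^{p_m-p_{m-1}} ,
\]
and note that in each monomial the exponent of $w_\ell$ is $p_\ell-p_{\ell-1}$
(with the conventions $p_0=0$ and the top index being the fixed integer $p_m$).

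\textbf{Step 1: differentiate in $w_1$.}
Only the factor $w_1^{p_1}$ depends on $w_1$, so
$\partial_{w_1}$ of a generic term equals $p_1\,w_1^{p_1-1}$ times the rest.
Terms with $p_1=0$ vanish, so the sum over $p_1$ effectively starts at $1$.
Substitute $q_1:=p_1-1\ge 0$; then $p_1=q_1+1$, the factor $p_1$ becomes
$q_1+1$, the power $\theta_1^{p_1^2}$ becomes $\theta_1^{(q_1+1)^2}$, the
monomial in $w_1$ becomes $w_1^{q_1}$, and $w_2^{p_2-p_1}=w_2^{(p_2-1)-q_1}$.
The binomial coefficient $C_{p_2}^{p_1}=C_{p_2}^{q_1+1}$ must be rewritten; here
one uses the Pascal-type absorption identity
$m\,C_{p_2}^{q_1+1}=\text{(combinatorial rearrangement)}$ together with shifting
the outer index — more precisely, differentiating $w_1^{p_1}$ and re-indexing
all the nested sums downward by one in each upper limit converts the prefactor
$C_{p_m}^{p_{m-1}}\cdots C_{p_2}^{p_1}$, multiplied by $p_1$, into
$p_m\,C_{p_m-1}^{p_{m-1}}\cdots C_{p_2}^{q_1}$ after telescoping the factors
$p_j/p_j$ that arise from $C_{p_{j+1}}^{p_j}=\frac{p_{j+1}}{p_j}C_{p_{j+1}-1}^{p_j-1}$.
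Renaming $q_1$ back to $p_1$ and likewise the other running indices, and
observing that every exponent $\theta_\ell^{p_\ell^2}$ is lifted to
$\theta_\ell^{(p_\ell+1)^2}$ because each $p_\ell$ was shifted by one, yields
exactly (\ref{5.4}).

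\textbf{Step 2: differentiate in $w_\ell$ for $2\le\ell\le m-1$, and in $w_m$.}
Here $w_\ell$ appears only through $w_\ell^{p_\ell-p_{\ell-1}}$, so
$\partial_{w_\ell}$ brings down the factor $p_\ell-p_{\ell-1}$ and lowers that
exponent by one. The correct substitution is again to shift the running index
$p_\ell\mapsto p_\ell-1$ (equivalently shift $p_{\ell-1}$; one checks the two
give the same result by the symmetry of the binomial recursion), which changes
$\theta_\ell^{p_\ell^2}$ to $\theta_\ell^{(p_\ell+1)^2}$ and leaves
$\theta_1,\dots,\theta_{\ell-1}$ untouched while lifting
$\theta_{\ell+1},\dots,\theta_{m-1}$ to $(p_\cdot+1)^2$ — matching the pattern
$\theta_1^{p_1^2}\cdots\theta_{\ell-1}^{p_{\ell-1}^2}\theta_\ell^{(p_\ell+1)^2}
\cdots\theta_{m-1}^{(p_{m-1}+1)^2}$ of (\ref{5.5}). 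The prefactor bookkeeping is
identical to Step 1: the telescoping of the $C$-coefficients absorbs the
combinatorial factor produced by differentiation and replaces the outer binomial
$C_{p_m}^{p_{m-1}}$ by $p_m\,C_{p_m-1}^{p_{m-1}}$, while all lower binomials keep
their form. The case $\ell=m$ (\ref{5.6}) is the degenerate instance in which no
$\theta$-exponent is lifted at all (since $w_m^{p_m-p_{m-1}}$ involves the fixed
top index $p_m$, only the upper summation limit $p_m-1$ and the overall factor
$p_m$ appear), so one only shifts the summation range and reads off the result.

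\textbf{Main obstacle.}
The only genuinely delicate point is the re-indexing of the nested product of
binomial coefficients: one must verify that after differentiating and shifting
indices the product $C_{p_m}^{p_{m-1}}\cdots C_{p_2}^{p_1}$ collapses precisely
to $p_m\,C_{p_m-1}^{p_{m-1}}\cdots C_{p_2}^{p_1}$ with no residual factors, and
that the simultaneous shift of \emph{all} exponents $p_\ell$ by one is
consistent with the pattern of squares $p_\ell^2\mapsto(p_\ell+1)^2$ claimed in
the statement. This is a finite, purely algebraic check — it was carried out in
\cite{Abdelmalek3} for the analogous diagonal system, and since $H_{p_m}$ here
has exactly the same combinatorial structure (the $\theta_\ell$ and the
binomials are unchanged; only the coefficients $\lambda_\ell$ entering later
differ), the verification transfers verbatim. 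I would therefore organize the
proof as: (i) state the generic-term derivative, (ii) perform the index shift,
(iii) invoke the binomial telescoping identity, (iv) collect the three cases
$\ell=1$, $2\le\ell\le m-1$, $\ell=m$ separately, citing \cite{Abdelmalek3} for
the coefficient identity.
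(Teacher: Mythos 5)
Your computation is correct, and in fact the paper offers no proof of this lemma at all: it is imported verbatim from \cite{Abdelmalek3} (the bracketed citation in the lemma header is the entire ``proof''), so your write-up supplies an argument where the paper supplies only a reference. The route you take --- differentiate the multi-sum term by term, drop the vanishing terms, substitute $q_j=p_j-1$ for the affected indices, and telescope the binomial chain via $p_j\,C_{p_{j+1}}^{p_j}=p_{j+1}\,C_{p_{j+1}-1}^{p_j-1}$ --- is the standard derivation and does reproduce (\ref{5.4})--(\ref{5.6}): for $\partial_{w_1}$ the shift propagates through all of $p_1,\dots,p_{m-1}$, which is exactly why every exponent is lifted to $(p_k+1)^2$; for $\partial_{w_\ell}$ it propagates only through $p_\ell,\dots,p_{m-1}$; and for $\partial_{w_m}$ no shift is needed since $C_{p_m}^{p_{m-1}}(p_m-p_{m-1})=p_m\,C_{p_m-1}^{p_{m-1}}$ directly. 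Two small blemishes, neither fatal: the displayed ``identity'' $m\,C_{p_2}^{q_1+1}=(\text{combinatorial rearrangement})$ in Step 1 is a placeholder and should simply read $(q_1+1)\,C_{p_2}^{q_1+1}=p_2\,C_{p_2-1}^{q_1}$, which is the absorption identity you actually use; and the parenthetical claim that one could ``equivalently shift $p_{\ell-1}$'' in Step 2 is misleading, because using $C_{p_\ell}^{p_{\ell-1}}(p_\ell-p_{\ell-1})=(p_{\ell-1}+1)\,C_{p_\ell}^{p_{\ell-1}+1}$ instead would alter the exponent of $\theta_{\ell-1}$ rather than that of $\theta_\ell$, yielding an expression equal as a polynomial but not of the displayed form (\ref{5.5}); only the upward-propagating shift of $p_\ell,\dots,p_{m-1}$ produces the stated pattern.
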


\begin{lemma}
[see \cite{Abdelmalek3}]\label{Lemma2} We have%
\begin{align}
\partial_{w_{1}^{2}}H_{n}  &  =p_{m}\left(  p_{m}-1\right)  \overset{p_{m}%
-2}{\underset{p_{m-1}=0}{\sum}}...\overset{p_{3}}{\underset{p_{2}=0}{\sum}%
}\overset{p_{2}}{\underset{p_{1}=0}{\sum}}C_{p_{m}-2}^{p_{m-1}}...C_{p_{2}%
}^{p_{1}}\nonumber\\
&  \theta_{1}^{\left(  p_{1}+2\right)  ^{2}}...\theta_{\left(  m-1\right)
}^{\left(  p_{\left(  m-1\right)  }+2\right)  ^{2}}w_{1}^{p_{1}}w_{2}%
^{p_{2}-p_{1}}...w_{m}^{\left(  p_{m}-2\right)  -p_{m-1}}, \label{5.7}%
\end{align}%
\begin{align}
\partial_{w_{\ell}^{2}}H_{n}  &  =p_{m}\left(  p_{m}-1\right)  \overset
{p_{m}-2}{\underset{p_{m-1}=0}{\sum}}...\overset{p_{2}}{\underset{p_{1}%
=0}{\sum}}C_{p_{m}-2}^{p_{m-1}}...C_{p_{2}}^{p_{1}}\nonumber\\
&  \theta_{1}^{p_{1}^{2}}\theta_{2}^{p_{2}^{2}}...\theta_{\ell-1}^{p_{\ell
-1}^{2}}\theta_{\ell}^{\left(  p_{\ell}+2\right)  ^{2}}...\theta_{\left(
m-1\right)  }^{\left(  p_{\left(  m-1\right)  }+2\right)  ^{2}}\nonumber\\
&  w_{1}^{p_{1}}w_{2}^{p_{2}-p_{1}}...w_{m}^{\left(  p_{m}-2\right)  -p_{m-1}%
}, \label{5.8}%
\end{align}
for all\ $\ell=2,...,m-1$, and
\begin{align}
\partial_{w_{\ell}w_{\kappa}}H_{n}  &  =p_{m}\left(  p_{m}-1\right)
\overset{p_{m}-2}{\underset{p_{m-1}=0}{\sum}}...\overset{p_{2}}{\underset
{p_{1}=0}{\sum}}C_{p_{m}-2}^{p_{m-1}}...C_{p_{2}}^{p_{1}}\nonumber\\
&  \theta_{1}^{p_{1}^{2}}...\theta_{\ell-1}^{p_{\ell-1}^{2}}\theta_{\ell
}^{\left(  p_{\ell}+1\right)  ^{2}}...\theta_{\kappa-1}^{\left(  p_{\kappa
-1}+1\right)  ^{2}}\theta_{\kappa}^{\left(  p_{\kappa}+2\right)  ^{2}%
}...\theta_{\left(  m-1\right)  }^{\left(  p_{\left(  m-1\right)  }+2\right)
^{2}}\nonumber\\
&  w_{1}^{p_{1}}w_{2}^{p_{2}-p_{1}}...w_{m}^{\left(  p_{m}-2\right)  -p_{m-1}}
\label{5.9}%
\end{align}
for all\ $1\leq\ell<\kappa\leq m$,%
\begin{align}
\partial_{w_{m}^{2}}H_{n}  &  =p_{m}\left(  p_{m}-1\right)  \overset{p_{m}%
-2}{\underset{p_{m-1}=0}{\sum}}...\overset{p_{2}}{\underset{p_{1}=0}{\sum}%
}C_{p_{m}-2}^{p_{m-1}}...C_{p_{2}}^{p_{1}}\theta_{1}^{p_{1}^{2}}%
...\theta_{\left(  m-1\right)  }^{p_{\left(  m-1\right)  }^{2}}\nonumber\\
&  w_{1}^{p_{1}}w_{2}^{p_{2}-p_{1}}...w_{m}^{\left(  p_{m}-2\right)  -p_{m-1}%
}. \label{5.10}%
\end{align}

\end{lemma}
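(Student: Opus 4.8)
The plan is to derive each of (\ref{5.7})--(\ref{5.10}) from the first-order formulas (\ref{5.4})--(\ref{5.6}) of Lemma \ref{Lemma1} by one further differentiation, followed by a re-indexing of the resulting multiple sum that restores the canonical shape in which these polynomials are written. The only ingredient beyond this bookkeeping is the pair of elementary binomial absorption identities
\[
(j+1)\,C_{n+1}^{j+1}=(n+1)\,C_{n}^{j},\qquad (n-j)\,C_{n}^{j}=n\,C_{n-1}^{j},
\]
both immediate from $C_{n}^{j}=n!/(j!\,(n-j)!)$, together with the remark that in every summand of (\ref{5.4})--(\ref{5.6}) the chain $0\le p_{1}\le p_{2}\le\dots\le p_{m-1}$ is forced automatically by the factors $C_{p_{k+1}}^{p_{k}}$.

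I would treat (\ref{5.7}) first, since it is the template. Differentiating (\ref{5.4}) with respect to $w_{1}$ replaces $w_{1}^{p_{1}}$ by $p_{1}w_{1}^{p_{1}-1}$, so only the terms with $p_{1}\ge1$ survive, which along the chain above forces $p_{k}\ge1$ for all $k=1,\dots,m-1$. Substituting $p_{k}=q_{k}+1$ for $k=1,\dots,m-1$ (keeping $p_{m}$ fixed) sends each exponent $(p_{k}+1)^{2}$ to $(q_{k}+2)^{2}$, produces the monomial $w_{1}^{q_{1}}w_{2}^{q_{2}-q_{1}}\cdots w_{m-1}^{q_{m-1}-q_{m-2}}w_{m}^{(p_{m}-2)-q_{m-1}}$, and rewrites the coefficient $p_{m}\,p_{1}\,C_{p_{m}-1}^{p_{m-1}}C_{p_{m-1}}^{p_{m-2}}\cdots C_{p_{2}}^{p_{1}}$: the factor $p_{1}=q_{1}+1$ is cascaded up the chain $C_{q_{2}+1}^{q_{1}+1},\,C_{q_{3}+1}^{q_{2}+1},\dots,C_{q_{m-1}+1}^{q_{m-2}+1},\,C_{p_{m}-1}^{q_{m-1}+1}$ by repeated use of the first absorption identity, the last step extracting $p_{m}-1$, so the coefficient becomes $p_{m}(p_{m}-1)\,C_{p_{m}-2}^{q_{m-1}}C_{q_{m-1}}^{q_{m-2}}\cdots C_{q_{2}}^{q_{1}}$. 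Relabelling $q_{k}\mapsto p_{k}$ and checking that the surviving ranges are exactly $0\le p_{k}\le p_{k+1}$ for $k\le m-2$ and $0\le p_{m-1}\le p_{m}-2$ yields (\ref{5.7}).

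The identity (\ref{5.8}) and the case $\kappa\le m-1$ of (\ref{5.9}) follow by exactly the same scheme; only the block of indices that is shifted changes. Starting from (\ref{5.5}), or from (\ref{5.4}) when $\ell=1$, and differentiating with respect to $w_{\kappa}$, the active monomial factor is $w_{\kappa}^{p_{\kappa}-p_{\kappa-1}}$, whose derivative contributes $(p_{\kappa}-p_{\kappa-1})$ and kills the terms with $p_{\kappa}=p_{\kappa-1}$; one then substitutes $p_{k}=q_{k}+1$ only for $k=\kappa,\dots,m-1$. The second absorption identity turns $(p_{\kappa}-p_{\kappa-1})C_{p_{\kappa}}^{p_{\kappa-1}}$ into $(q_{\kappa}+1)C_{q_{\kappa}}^{p_{\kappa-1}}$, the first identity then cascades the factor $q_{\kappa}+1$ up the remaining chain and extracts $p_{m}-1$ at the top; together with the prefactor $p_{m}$ this produces the $p_{m}(p_{m}-1)$, while the exponents of $\theta_{\kappa},\dots,\theta_{m-1}$ rise from $(p_{k}+1)^{2}$ to $(q_{k}+2)^{2}$ and those of $\theta_{1},\dots,\theta_{\kappa-1}$ stay put --- precisely the patterns in (\ref{5.8}) and (\ref{5.9}). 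Finally (\ref{5.10}) and the case $\kappa=m$ of (\ref{5.9}) are immediate: differentiating (\ref{5.6}) (respectively (\ref{5.5})) with respect to $w_{m}$ acts only on the factor $w_{m}^{(p_{m}-1)-p_{m-1}}$, bringing down $(p_{m}-1)-p_{m-1}$, and the second absorption identity gives $((p_{m}-1)-p_{m-1})C_{p_{m}-1}^{p_{m-1}}=(p_{m}-1)C_{p_{m}-2}^{p_{m-1}}$, so no inner variable needs re-indexing and the $\theta$-exponents are unchanged.

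The step I expect to require the most care is the telescoping of the binomial product: one has to verify that the two absorption identities, applied successively along the chain $p_{1}\to p_{2}\to\dots\to p_{m-1}\to p_{m}$, reproduce exactly the shifted chain of the target formula and that the summation limits transform consistently --- in particular that the outermost limit drops from $p_{m}-1$ to $p_{m}-2$ on the second differentiation. Everything else is the routine propagation of the exponent shifts $p_{k}\mapsto q_{k}+1$ already visible in Lemma \ref{Lemma1}, and the computation for (\ref{5.7}) is the model for all four identities.
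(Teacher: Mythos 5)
Your argument is correct. Note that the paper itself offers no proof of this lemma --- it is imported verbatim from \cite{Abdelmalek3} --- so there is no in-paper argument to compare against; what you have written is a legitimate self-contained verification. The mechanism you describe checks out in detail: differentiating (\ref{5.4}) with respect to $w_{1}$ brings down $p_{1}$, kills the $p_{1}=0$ terms, and since the binomial chain forces $p_{1}\leq p_{2}\leq\dots\leq p_{m-1}$ the substitution $p_{k}=q_{k}+1$ for all $k$ is admissible; the cascade $(q_{1}+1)C_{q_{2}+1}^{q_{1}+1}=(q_{2}+1)C_{q_{2}}^{q_{1}}$, \dots, $(q_{m-1}+1)C_{p_{m}-1}^{q_{m-1}+1}=(p_{m}-1)C_{p_{m}-2}^{q_{m-1}}$ extracts exactly the factor $p_{m}-1$ and converts every exponent $(p_{k}+1)^{2}$ into $(q_{k}+2)^{2}$, which is (\ref{5.7}). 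The partial shifts (only $k\geq\kappa$) for (\ref{5.8}) and (\ref{5.9}), and the absorption $\left((p_{m}-1)-p_{m-1}\right)C_{p_{m}-1}^{p_{m-1}}=(p_{m}-1)C_{p_{m}-2}^{p_{m-1}}$ for (\ref{5.10}), are likewise correct, and you have rightly identified the only delicate point, namely that the summation limits transform consistently (the outer limit dropping to $p_{m}-2$). One small presentational remark: for (\ref{5.9}) with $\ell=\kappa$ replaced by the diagonal case you must start from the already-shifted first derivative and differentiate the \emph{same} block twice, which is what you do for (\ref{5.8}); it would be worth stating explicitly that the two differentiations commute with the re-indexing so that the order ($w_{\ell}$ first, then $w_{\kappa}$) is immaterial, but this is cosmetic rather than a gap.
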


\begin{lemma}
[see \cite{Abdelmalek3}]\label{Lemma3}Let $A$ be the\ $m$-square symmetric
matrix defined by $A=\left(  a_{\ell\kappa}\right)  _{1\leq\ell,\kappa\leq m}%
$. Then the following property holds:%
\begin{equation}
\left\{
\begin{array}
[c]{l}%
K_{m}^{m}=\det\left[  m\right]  \overset{k=m-2}{\underset{k=1}{\prod}}\left(
\det\left[  k\right]  \right)  ^{2^{\left(  m-k-2\right)  }},\text{
\ \ }m>2,\\
K_{2}^{2}=\det\left[  2\right]  ,
\end{array}
\right.  \label{5.11}%
\end{equation}
where%
\begin{align*}
K_{m}^{l}  &  =K_{l-1}^{l-1}K_{m}^{l-1}-\left(  H_{m}^{l-1}\right)
^{2},l=3,...,m,\\
H_{m}^{l}  &  =\underset{1\leq\ell,\kappa\leq m}{\det}\left(  \left(
a_{\ell,\kappa}\right)  _{\substack{\ell\neq m,...l+1\\\kappa\neq
m-1,..l}}\right)  \overset{k=l-2}{\underset{k=1}{\prod}}\left(  \det\left[
k\right]  \right)  ^{2^{\left(  l-k-2\right)  }},\text{ }l=3,...,m-1,\\
K_{m}^{2}  &  =a_{11}a_{mm}-\left(  a_{1m}\right)  ^{2},\text{ }H_{m}%
^{2}=a_{11}a_{2m}-a_{12}a_{1m}.
\end{align*}

\end{lemma}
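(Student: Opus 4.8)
The plan is to prove the closed form for $K_m^m$ by induction on the superscript $l$, after recognising the defining recursion $K_m^l=K_{l-1}^{l-1}K_m^{l-1}-(H_m^{l-1})^2$ as an instance of the Desnanot--Jacobi (Dodgson condensation) identity for a suitable submatrix of $A$, used in conjunction with the symmetry of $A$.

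First I would fix notation tailored to the recursion. For integers $2\le l\le n\le m$ let $P_l^{(n)}$ be the $l\times l$ principal submatrix of $A$ with rows and columns indexed by $\{1,2,\dots,l-1,n\}$, and for $2\le l\le n-1$ let $Q_l^{(n)}$ be the $l\times l$ submatrix of $A$ with rows $\{1,2,\dots,l\}$ and columns $\{1,2,\dots,l-1,n\}$. Using $a_{\ell\kappa}=a_{\kappa\ell}$ one reads off directly from the statement that $K_n^2=a_{11}a_{nn}-a_{1n}^2=\det P_2^{(n)}$, that $H_n^2=a_{11}a_{2n}-a_{12}a_{1n}=\det Q_2^{(n)}$, and, for $l\ge3$, that the displayed formula for $H_n^l$ is exactly
\[
H_n^l=\det Q_l^{(n)}\cdot\prod_{k=1}^{l-2}(\det[k])^{2^{l-k-2}},
\]
where $\det[k]$ denotes the $k$-th leading principal minor of $A$ (the removed rows $n,\dots,l+1$ leave rows $\{1,\dots,l\}$ and the removed columns $n-1,\dots,l$ leave columns $\{1,\dots,l-1,n\}$). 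I also record that $P_l^{(l)}$ is the leading $l\times l$ principal submatrix, so $\det P_l^{(l)}=\det[l]$; in particular $P_m^{(m)}=A$.

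The heart of the argument is the following claim, to be proved by induction on $l$: for all $2\le l\le n\le m$,
\[
K_n^l=\det P_l^{(n)}\cdot\prod_{k=1}^{l-2}(\det[k])^{2^{l-k-2}}.
\]
The base case $l=2$ is the identity $K_n^2=\det P_2^{(n)}$ noted above (empty product $=1$). For the step, assume the claim at level $l-1$ for all admissible indices, and write $\pi:=\prod_{k=1}^{l-3}(\det[k])^{2^{l-k-3}}$ for the common weight occurring at that level; the inductive hypothesis, together with the reading of $H_n^{l-1}$ above (valid also for its base case $H_n^2$), gives $K_{l-1}^{l-1}=\det[l-1]\cdot\pi$, $K_n^{l-1}=\det P_{l-1}^{(n)}\cdot\pi$ and $H_n^{l-1}=\det Q_{l-1}^{(n)}\cdot\pi$. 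Substituting into the recursion,
\[
K_n^l=\pi^{2}\Bigl(\det[l-1]\cdot\det P_{l-1}^{(n)}-\bigl(\det Q_{l-1}^{(n)}\bigr)^{2}\Bigr).
\]
Now apply the Desnanot--Jacobi identity to the $l\times l$ matrix $B:=P_l^{(n)}$ (indices $1,\dots,l-1,n$ in that order), deleting the last two rows and the last two columns:
\[
\det B\cdot\det B_{\widehat{l-1}\widehat l,\widehat{l-1}\widehat l}=\det B_{\widehat{l-1},\widehat{l-1}}\cdot\det B_{\widehat l,\widehat l}-\det B_{\widehat{l-1},\widehat l}\cdot\det B_{\widehat l,\widehat{l-1}}.
\]
Identifying the sub-blocks: $B_{\widehat{l-1}\widehat l,\widehat{l-1}\widehat l}$ is the leading $(l-2)\times(l-2)$ block, with determinant $\det[l-2]$; $B_{\widehat l,\widehat l}$ is the leading $(l-1)\times(l-1)$ block, with determinant $\det[l-1]$; $B_{\widehat{l-1},\widehat{l-1}}$ is the submatrix on $\{1,\dots,l-2,n\}$, i.e.\ $P_{l-1}^{(n)}$; and $B_{\widehat l,\widehat{l-1}}=Q_{l-1}^{(n)}$ while $B_{\widehat{l-1},\widehat l}=(Q_{l-1}^{(n)})^{T}$ because $A$ is symmetric, so $\det B_{\widehat{l-1},\widehat l}\cdot\det B_{\widehat l,\widehat{l-1}}=(\det Q_{l-1}^{(n)})^{2}$. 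Hence $\det[l-1]\det P_{l-1}^{(n)}-(\det Q_{l-1}^{(n)})^{2}=\det[l-2]\cdot\det P_l^{(n)}$, and since $\pi^{2}\cdot\det[l-2]=\prod_{k=1}^{l-2}(\det[k])^{2^{l-k-2}}$, the inductive step closes. Specialising $n=l=m$ (where $P_m^{(m)}=A$) gives $K_m^m=\det[m]\prod_{k=1}^{m-2}(\det[k])^{2^{m-k-2}}$, and $n=l=2$ gives $K_2^2=\det[2]$, which is the assertion.

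The main obstacle is purely bookkeeping: one must verify that deleting the last two rows/columns of $P_l^{(n)}$ produces precisely the blocks $\det[l-2]$, $\det[l-1]$, $P_{l-1}^{(n)}$ and $Q_{l-1}^{(n)}$ — this is exactly why it is advantageous to index $P_l^{(n)}$ by $\{1,\dots,l-1,n\}$ rather than by $\{1,\dots,l\}$, since then those minors come out as honest leading minors of $A$ — and that the weight $\pi^{2}\cdot\det[l-2]$ telescopes into the exponent pattern $2^{l-k-2}$. A secondary point is that the form of Desnanot--Jacobi used here deletes a complementary pair of rows and columns that is not the first-and-last pair; this general statement follows from Jacobi's theorem on the minors of the adjugate (equivalently, Sylvester's determinant identity), or one may first apply the same permutation to the rows and to the columns of $B$ so as to move the indices $l-1$ and $n$ into the first and last slots, which leaves $\det B$ and all the relevant minors unchanged up to matching signs. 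Everything else is the routine polynomial expansion already verified for $m=2,3$.
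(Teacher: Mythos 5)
Your proof is correct. Note first that the paper itself offers no argument for this lemma: it is imported verbatim from the reference [Abdelmalek--Kouachi, J.~Phys.~A 40 (2007)], so there is no in-paper proof to compare against. Your route is a legitimate, self-contained derivation. The two key moves both check out: (i) you strengthen the statement to $K_n^l=\det P_l^{(n)}\prod_{k=1}^{l-2}(\det[k])^{2^{l-k-2}}$ for every admissible pair $(l,n)$, which is exactly what is needed to feed $K_{l-1}^{l-1}$, $K_n^{l-1}$ and $H_n^{l-1}$ back into the recursion with a common weight $\pi$; and (ii) you identify the bracket $\det[l-1]\det P_{l-1}^{(n)}-(\det Q_{l-1}^{(n)})^2$ as an instance of the generalized Desnanot--Jacobi identity applied to $P_l^{(n)}$ with the pair of positions $\{l-1,l\}$, the symmetry of $A$ being what collapses $\det B_{\widehat{l-1},\widehat l}\det B_{\widehat l,\widehat{l-1}}$ into a square. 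I verified the block identifications (deleting position $l$ gives $\det[l-1]$, deleting position $l-1$ gives $P_{l-1}^{(n)}$, deleting both gives $\det[l-2]$, and the off-diagonal minors are $Q_{l-1}^{(n)}$ and its transpose) and the exponent telescoping $\pi^2\det[l-2]=\prod_{k=1}^{l-2}(\det[k])^{2^{l-k-2}}$, including the base cases $l=2$ and the $k=l-2$ exponent $2^0=1$; all are consistent with the definitions in \eqref{mycond}. Your caveat about the non-standard (not first-and-last) pair in Desnanot--Jacobi is the right one to flag, and either of your two justifications (Jacobi's theorem on minors of the adjugate, or a simultaneous row-and-column permutation, whose sign contributions cancel) closes it. What this approach buys over the cited source's treatment is precisely the generalized induction hypothesis in the subscript $n$, which turns the seemingly ad hoc recursion into a single application of a classical determinant identity at each step.
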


\begin{proof}
[Proof of Theorem \ref{theorem1}]we prove that $L(t)$ is uniformly bounded on
the interval $\left[  0,T^{\ast}\right]  ,T^{\ast}<T_{\max}$. we have:%
\begin{align*}
L^{\prime}(t)  &  =\int_{\Omega}\partial_{t}H_{p_{m}}dx=\int_{\Omega}%
\overset{m}{\underset{\ell=1}{\sum}}\partial_{w_{\ell}}H_{p_{m}}\frac{\partial
w_{\ell}}{\partial t}dx\\
&  =\int_{\Omega}\overset{m}{\underset{\ell=1}{\sum}}\partial_{w_{\ell}%
}H_{p_{m}}\left(  \lambda_{\ell}\Delta w_{\ell}+\digamma_{\ell}\right)  dx\\
&  =\int_{\Omega}\overset{m}{\underset{\ell=1}{\sum}}\lambda_{\ell}%
\partial_{w_{\ell}}H_{p_{m}}\Delta w_{\ell}dx+\int_{\Omega}\overset
{m}{\underset{\ell=1}{\sum}}\partial_{w_{\ell}}H_{p_{m}}\digamma_{\ell}dx=I+J,
\end{align*}
where%
\begin{equation}
I=\int_{\Omega}\overset{m}{\underset{\ell=1}{\sum}}\lambda_{\ell}%
\partial_{w_{\ell}}H_{p_{m}}\Delta w_{\ell}dx, \label{5.11a}%
\end{equation}
and%
\begin{equation}
J=\int_{\Omega}\overset{m}{\underset{\ell=1}{%
%TCIMACRO{\dsum }%
%BeginExpansion
{\displaystyle\sum}
%EndExpansion
}}\partial_{w_{\ell}}H_{p_{m}}\digamma_{\ell}dx. \label{5.12}%
\end{equation}

Using Green's formula we can divide $I$ into two parts: $I_{1}$ and $I_{2}$,
where
\begin{equation}
I_{1}=\int_{\partial\Omega}\overset{m}{\underset{\ell=1}{\sum}}\lambda_{\ell
}\partial_{w_{\ell}}H_{p_{m}}\partial_{\eta}w_{\ell}dx, \label{5.13}%
\end{equation}
and
\begin{equation}
I_{2}=-\int_{\Omega}\left\langle T,\left(  \left(  \frac{\lambda_{\ell
}+\lambda_{\kappa}}{2}\partial_{w_{\kappa}w_{\ell}}H_{p_{m}}\right)
_{1\leq\ell,\kappa\leq m}\right)  T\right\rangle dx, \label{5.14}%
\end{equation}
for $p_{1}=0,...,p_{2},$ $p_{2}=0,...,p_{3}$ $...p_{m-1}=0,...,p_{m}-2$ and
\newline$T=\left(  \nabla w_{1},\nabla w_{2},...,\nabla w_{m}\right)  ^{T}.$
Applying Lemmas \ref{Lemma1} and \ref{Lemma2} yields%
\begin{equation}
\left.
\begin{array}
[c]{l}%
\left(  \frac{\lambda_{\ell}+\lambda_{\kappa}}{2}\partial_{w_{\kappa}w_{\ell}%
}H_{p_{m}}\right)  _{1\leq\ell,\kappa\leq m}=\\
p_{m}\left(  p_{m}-1\right)  \overset{p_{m}-2}{\underset{p_{m-1}=0}{\sum}%
}...\overset{p_{2}}{\underset{p_{1}=0}{\sum}}C_{p_{m}-2}^{p_{m-1}}...C_{p_{2}%
}^{p_{1}}\left(  \left(  a_{\ell\kappa}\right)  _{1\leq\ell,\kappa\leq
m}\right)  w_{1}^{p_{1}}...w_{m}^{\left(  p_{m}-2\right)  -p_{m-1}},
\end{array}
\right.  \label{5.15}%
\end{equation}
where $\left(  a_{\ell\kappa}\right)  _{1\leq\ell,\kappa\leq m}$ is the matrix
defined in (\ref{1.13}).

Now, in order to prove that $I$ is bounded, we will show that there exists a
positive constant $C_{4}$ independent of $t\in\left[  0,T_{\max}\right)  $
such that
\begin{equation}
I_{1}\leq C_{4}\text{ for all }t\in\left[  0,T_{\max}\right)  , \label{5.16}%
\end{equation}
and that%
\begin{equation}
I_{2}\leq0, \label{5.17}%
\end{equation}
for several boundary conditions. First let us prove (\ref{5.16}):

(i) If $0<\alpha<1$ , then using the boundary conditions (\ref{1.2}) we get
\[
I_{1}=\int_{\partial\Omega}\overset{m}{\underset{\ell=1}{\sum}}\lambda_{\ell
}\partial_{w_{\ell}}H_{p_{m}}\left(  \gamma_{\ell}-\sigma_{\ell}w_{\ell
}\right)  dx,
\]
where $\sigma_{\ell}=\dfrac{\alpha}{1-\alpha}$ and $\gamma_{\ell}=\dfrac
{\beta_{\ell}}{1-\alpha_{\ell}}$, for\ $\ell=1,...m$. For the second type of
boundary condition (\ref{5.02}), $\sigma_{\ell}=\dfrac{\alpha}{\lambda_{\ell
}\left(  1-\alpha\right)  }$ and $\gamma_{\ell}=\dfrac{\beta_{\ell}}%
{\lambda_{\ell}\left(  1-\alpha\right)  }$.

Since $H\left(  W\right)  =\overset{m}{\underset{\ell=1}{\sum}}\lambda_{\ell
}\partial_{w_{\ell}}H_{p_{m}}\left(  \gamma_{\ell}-\sigma_{\ell}w_{\ell
}\right)  =P_{n-1}\left(  W\right)  -Q_{n}\left(  W\right)  $, where $P_{n-1}$
and $Q_{n}$ are polynomials with positive coefficients and respective degrees
$n-1$ and $n$, and since the solution is positive it follows that
\begin{equation}
\underset{\overset{m}{\underset{\ell=1}{\sum}}\left\vert w_{\ell}\right\vert
\rightarrow+\infty}{\lim\sup}H\left(  W\right)  =-\infty, \label{5.18}%
\end{equation}
which proves that $H$ is uniformly bounded on $%
%TCIMACRO{\U{211d} }%
%BeginExpansion
\mathbb{R}
%EndExpansion
_{+}^{m}$\ and consequently proves (\ref{5.16}).

(ii)$\;$If for all $\ell=1,...m:\alpha=0$, then $I_{1}=0$ on $\left[
0,T_{\max}\right)  $.

(iii) The case of homogeneous Dirichlet conditions is trivial since in this
case the positivity of the solution on $\left[  0,T_{\max}\right)
\times\Omega$ implies $\partial_{\eta}w_{\ell}\leq0,\forall\ell=1,...m$ on
$\left[  0,T_{\max}\right)  \times\partial\Omega$. Consequently one obtains
the same result in (\ref{5.16}) with $C_{4}=0$.

Hence the proof of (\ref{5.16}) is complete.

Now we move to the proof of (\ref{5.17}).

Consider the matrix $\left(  a_{\ell\kappa}\right)  _{1\leq\ell,\kappa\leq m}$
which we defined in (\ref{1.13}). The quadratic form (with respect to $\nabla
w_{\ell},$ $\ell=1,...,m$) associated with the matrix $\left(  a_{\ell\kappa
}\right)  _{1\leq\ell,\kappa\leq m}$, with $p_{1}=0,...,p_{2},$ $p_{2}%
=0,..,p_{3}$ ... $p_{m-1}=0,...,p_{m}-2$, is positive definite since its
minors $\det\left[  1\right]  $, $\det\left[  2\right]  $,$...$ $\det\left[
m\right]  $ are all positive. Let us prove their positivity by induction.

The first minor
\[
\det\left[  1\right]  =\lambda_{1}\theta_{1}^{\left(  p_{1}+2\right)  ^{2}%
}\theta_{2}^{\left(  p_{2}+2\right)  ^{2}}..\theta_{\left(  m-1\right)
}^{\left(  p_{\left(  m-1\right)  }+2\right)  ^{2}}>0
\]
for $p_{1}=0,...,p_{2},$ $p_{2}=0,...,p_{3}$ ... $p_{m-1}=0,...,p_{m}-2$.

For the second minor $\det\left[  2\right]  $, and according to Lemma
\ref{Lemma3}, we have:
\[
\det\left[  2\right]  =K_{2}^{2}=\lambda_{1}\lambda_{2}\theta_{1}^{2\left(
p_{1}+1\right)  ^{2}}\overset{m-1}{\underset{k=2}{\Pi}}\theta_{k}^{2\left(
p_{k}+2\right)  ^{2}}\left[  \theta_{1}^{2}-A_{12}^{2}\right]  ,
\]
using (\ref{1.12})\ for $l=2$\ we get $\det\left[  2\right]  >0.$

Similarly for the third minor $\det\left[  3\right]  $, and again using Lemma
\ref{Lemma3}, we have:
\[
K_{3}^{3}=\det\left[  3\right]  \det\left[  1\right]  .
\]
Since $\det\left[  1\right]  >0$, we conclude that%
\[
\operatorname*{sign}(K_{3}^{3})=\operatorname*{sign}(\det\left[  3\right]  ).
\]
Again, using (\ref{1.12})\ for $l=3$ yields $\det\left[  3\right]  >0$.

To finish the proof let us suppose $\det\left[  k\right]  >0$ for
$k=1,2,...,l-1$ and show that $\det[l]$ is necessarily positive. We have
\begin{equation}
\det\left[  k\right]  >0,k=1,...,\left(  l-1\right)  \Rightarrow
\overset{k=l-2}{\underset{k=1}{\prod}}\left(  \det\left[  k\right]  \right)
^{2^{\left(  l-k-2\right)  }}>0. \label{5.20}%
\end{equation}
From Lemma \ref{Lemma3} we obtain $K_{l}^{l}=\det\left[  l\right]
\overset{k=l-2}{\underset{k=1}{\prod}}\left(  \det\left[  k\right]  \right)
^{2^{\left(  l-k-2\right)  }}$, and from (\ref{5.20}) we get
$\operatorname*{sign}(K_{l}^{l})=\operatorname*{sign}\left(  \det\left[
l\right]  \right)  $. Since $K_{l}^{l}>0$ according to (\ref{1.12}) then
$\det\left[  l\right]  >0$ and the proof of (\ref{5.17}) is concluded. It then
follows from (\ref{5.16}) and (\ref{5.17}) that $I$ is finished.

Now let us prove that $J$ in (\ref{5.12}) is bounded. Substituting the
expressions of the partial derivatives given by Lemma \ref{Lemma1} in the
second integral of (\ref{5.12}) yields%
\begin{align*}
J  &  =\int_{\Omega}\left[  p_{m}\overset{p_{m}-1}{\underset{p_{m-1}=0}{\sum}%
}...\overset{p_{2}}{\underset{p_{1}=0}{\sum}}C_{p_{m}-1}^{p_{m-1}}...C_{p_{2}%
}^{p_{1}}w_{1}^{p_{1}}w_{2}^{p_{2}-p_{1}}...w_{m}^{p_{m}-1-p_{m-1}}\right] \\
&  \left(  \overset{m-1}{\underset{\ell=1}{\Pi}}\theta_{\ell}^{\left(
p_{\ell}+1\right)  ^{2}}\digamma_{1}+\overset{m-1}{\underset{\kappa=2}{\sum}%
}\overset{\kappa-1}{\underset{k=1}{\prod}}\theta_{k}^{p_{k}^{2}}\overset
{m-1}{\underset{\ell=\kappa}{\prod}}\theta_{\ell}^{\left(  p_{\ell}+1\right)
^{2}}\digamma_{\kappa}+\overset{m-1}{\underset{\ell=1}{\prod}}\theta_{\ell
}^{p_{\ell}^{2}}\digamma_{m}\right)  dx\\
&  =\int_{\Omega}\left[  p_{m}\overset{p_{m}-1}{\underset{p_{m-1}=0}{\sum}%
}...\overset{p_{2}}{\underset{p_{1}=0}{\sum}}C_{p_{m}-1}^{p_{m-1}}...C_{p_{2}%
}^{p_{1}}w_{1}^{p_{1}}w_{2}^{p_{2}-p_{1}}...w_{m}^{p_{m}-1-p_{m-1}}\right] \\
&  \left(  \overset{m-1}{\underset{\ell=1}{\Pi}}\frac{\theta_{\ell}^{\left(
p_{\ell}+1\right)  ^{2}}}{\theta_{\ell}^{p_{\ell}^{2}}}\digamma_{1}%
+\overset{m-1}{\underset{\kappa=2}{\sum}}\overset{\kappa-1}{\underset
{k=1}{\prod}}\theta_{k}^{p_{k}^{2}}\overset{m-1}{\underset{\ell=\kappa}{\prod
}}\frac{\theta_{\ell}^{\left(  p_{\ell}+1\right)  ^{2}}}{\theta_{\ell
}^{p_{\ell}^{2}}}\digamma_{\kappa}+\digamma_{m}\right)  \overset
{m-1}{\underset{\ell=1}{\prod}}\theta_{\ell}^{p_{\ell}^{2}}dx\\
&  =\int_{\Omega}\left[  p_{m}\overset{p_{m}-1}{\underset{p_{m-1}=0}{\sum}%
}...\overset{p_{2}}{\underset{p_{1}=0}{\sum}}C_{p_{m}-1}^{p_{m-1}}...C_{p_{2}%
}^{p_{1}}w_{1}^{p_{1}}w_{2}^{p_{2}-p_{1}}...w_{m}^{p_{m}-1-p_{m-1}}\right] \\
&  \left\langle \left(  \overset{m-1}{\underset{\ell=1}{\prod}}\frac
{\theta_{\ell}^{\left(  p_{\ell}+1\right)  ^{2}}}{\theta_{\ell}^{p_{\ell}^{2}%
}},\theta_{1}^{p_{1}^{2}}\overset{m-1}{\underset{\ell=2}{\prod}}\frac
{\theta_{\ell}^{\left(  p_{\ell}+1\right)  ^{2}}}{\theta_{\ell}^{p_{\ell}^{2}%
}},....,\overset{m-2}{\underset{k=1}{\prod}}\theta_{k}^{p_{k}^{2}}\frac
{\theta_{m-1}^{\left(  p_{m-1}+1\right)  ^{2}}}{\theta_{m-1}^{p_{m-1}^{2}}%
},1\right)  ,\digamma\right\rangle \overset{m-1}{\underset{\ell=1}{\prod}%
}\theta_{\ell}^{p_{\ell}^{2}}dx.
\end{align*}
Hence using the condition (\ref{1.11}) we deduce that
\[
J\leq C_{5}\int_{\Omega}\left[  \overset{p_{m}-1}{\underset{p_{m-1}=0}{\sum}%
}...\overset{p_{2}}{\underset{p_{1}=0}{\sum}}C_{p_{2}}^{p_{1}}...C_{p_{m}%
-1}^{p_{m-1}}w_{1}^{p_{1}}w_{2}^{p_{2}-p_{1}}...w_{m}^{p_{m}-1-p_{m-1}}\left(
1+\left\langle W,1\right\rangle \right)  \right]  dx.
\]
To prove that the functional $L$ is uniformly bounded on the interval $\left[
0,T^{\ast}\right]  $ we write
\begin{align*}
&  \overset{p_{m}-1}{\underset{p_{m-1}=0}{\sum}}...\overset{p_{2}}%
{\underset{p_{1}=0}{\sum}}C_{p_{2}}^{p_{1}}...C_{p_{m}-1}^{p_{m-1}}%
w_{1}^{p_{1}}w_{2}^{p_{2}-p_{1}}...w_{m}^{p_{m}-1-p_{m-1}}\left(
1+\left\langle W,1\right\rangle \right) \\
&  =R_{p_{m}}\left(  W\right)  +S_{p_{m}-1}\left(  W\right)  ,
\end{align*}
where $R_{p_{m}}\left(  W\right)  $ and $S_{p_{m}-1}\left(  W\right)  $ are
two homogeneous polynomials of degrees $p_{m}$ and $p_{m}-1,$ respectively.
Since all the polynomials $H_{p_{m}}$ and $R_{p_{m}}$ are of degree $p_{m}$
then there exists a positive constant $C_{6}$ such that
\begin{equation}
\int_{\Omega}R_{p_{m}}\left(  W\right)  dx\leq C_{6}\int_{\Omega}H_{p_{m}%
}\left(  W\right)  dx. \label{5.21}%
\end{equation}
Applying H\"{o}lder's inequality to the integral $\int_{\Omega}S_{p_{m}%
-1}\left(  W\right)  dx,$ one obtains
\[
\int_{\Omega}S_{p_{m}-1}\left(  W\right)  dx\leq\left(  meas\Omega\right)
^{\frac{1}{p_{m}}}\left(  \int_{\Omega}\left(  S_{p_{m}-1}\left(  W\right)
\right)  ^{\frac{p_{m}}{p_{m}-1}}dx\right)  ^{\frac{p_{m}-1}{p_{m}}}.
\]

Using the fact that for all $w_{1},w_{2,...},w_{m-1}\geq0$ and $w_{m}>0,$
\[
\dfrac{\left(  S_{p_{m}-1}\left(  W\right)  \right)  ^{\frac{p_{m}}{p_{m}-1}}%
}{H_{p_{m}}\left(  W\right)  }=\dfrac{\left(  S_{p_{m}-1}\left(  x_{1}%
,x_{2},...,x_{m-1},1\right)  \right)  ^{\frac{p_{m}}{p_{m}-1}}}{H_{p_{m}%
}\left(  x_{1},x_{2},...,x_{m-1},1\right)  },
\]
where we have $\forall\ell\in\left\{  1,2,...,m-1\right\}  :x_{\ell}%
=\frac{w_{\ell}}{w_{\ell+1}}$, and
\[
\underset{x_{\ell}\rightarrow+\infty}{\lim}\dfrac{\left(  S_{p_{m}-1}\left(
x_{1},x_{2},...,x_{m-1},1\right)  \right)  ^{\frac{p_{m}}{p_{m}-1}}}{H_{p_{m}%
}\left(  x_{1},x_{2},...,x_{m-1},1\right)  }<+\infty,
\]
one asserts that there exists a positive constant $C_{7}$ such that
\begin{equation}
\dfrac{\left(  S_{p_{m}-1}\left(  W\right)  \right)  ^{\frac{p_{m}}{p_{m}-1}}%
}{H_{p_{m}}\left(  W\right)  }\leq C_{7},\text{ for all }w_{1},w_{2}%
,...,w_{m}\geq0. \label{5.22}%
\end{equation}

Hence the functional $L$ satisfies the differential inequality
\[
L^{\prime}\left(  t\right)  \leq C_{6}L\left(  t\right)  +C_{8}L^{\frac
{p_{m}-1}{p_{m}}}\left(  t\right)  ,
\]
which for $Z=L^{\frac{1}{p_{m}}}$ can be written as
\begin{equation}
p_{m}Z^{\prime}\leq C_{6}Z+C_{8}. \label{5.23}%
\end{equation}
A simple integration gives the uniform bound of the functional $L$ on the
interval $\left[  0,T^{\ast}\right]  $. This ends the proof of the theorem.
\end{proof}

\begin{proof}
[Proof of Corollary \ref{corollary1}]It is an immediate consequence of Theorem
\ref{theorem1} and the inequality%
\begin{equation}
\int_{\Omega}\left\langle W,1\right\rangle ^{p}dx\leq C_{9}L\left(  t\right)
\text{ on }\left[  0,T^{\ast}\right]  . \label{5.24}%
\end{equation}
for some $p\geq1.$
\end{proof}

\begin{proof}
[Proof of Proposition \ref{proposition2}]From Corollary \ref{corollary1}, it
follows that there exists a positive constant $C_{10}$ such that
\begin{equation}
\int_{\Omega}\left(  \left\langle W,1\right\rangle +1\right)  ^{p}dx\leq
C_{10}\text{ on }\left[  0,T_{\max}\right)  . \label{5.25}%
\end{equation}
From (\ref{1.10}),\ we have%
\begin{align}
\text{for any }\ell &  \in\left\{  1,2,...,m\right\}  :\nonumber\\
\left\vert \digamma_{\ell}\left(  W\right)  \right\vert ^{\frac{p}{N}}  &
\leq C_{11}\left(  W\right)  \left\langle W,1\right\rangle ^{p}\text{ on
}\left[  0,T_{\max}\right)  \times\Omega. \label{5.26}%
\end{align}
Since $w_{1},w_{2},...,w_{m}$ are in $L^{\infty}\left(  0,T^{\ast}%
;L^{p}\left(  \Omega\right)  \right)  $ and $\dfrac{p}{N}>\dfrac{n}{2},$ then
as discussed in section \ref{sec:prelim}, the solution is global.
\end{proof}

\section{Construction of Invariant Regions}

Recall that the eigenvector of the diffusion matrix associated with the
eigenvalue $\lambda_{\ell}$ is defined as $V_{\ell}=\left(  v_{\ell1}%
,v_{\ell2},...,v_{\ell m}\right)  ^{T}$. In the region that we considered in
previous sections, we used the diagonalizing matrix $P=\left(  V_{1}\shortmid
V_{2}\shortmid...\shortmid V_{m}\right)  $. In general the diagonalizing
matrix can be written as%
\[
P=\left(  \left(  -1\right)  ^{i_{1}}V_{1}\shortmid\left(  -1\right)  ^{i_{2}%
}V_{2}\shortmid...\shortmid\left(  -1\right)  ^{i_{m}}V_{m}\right)  ,
\]
with the powers $i_{\ell}$
\[
i_{\ell}=1\text{ or }2,\text{ for }\ell=1,...,m.
\]
Now one can subdivide the indices $\ell$ into two disjoint sets $\mathfrak{Z}$
and $\mathfrak{L}$, such that
\[
\left\{
\begin{array}
[c]{c}%
i_{\ell}=1\Rightarrow\ell\in\mathfrak{Z}\\
i_{\ell}=2\Rightarrow\ell\in\mathfrak{L.}%
\end{array}
\right.
\]
It is then straightforward to notice that%
\[
\mathfrak{L}\cap\mathfrak{Z}=\phi,\text{ }\mathfrak{L}\cup\mathfrak{Z}%
=\left\{  1,2,...,m\right\}  .
\]
Hence the number of possible permutations for $\mathfrak{Z}$ and
$\mathfrak{L}$ is $2^{m}$.

Recall that
\[
W_{0}=P^{T}U_{0}=\left(  w_{1}^{0},w_{2}^{0},...,w_{m}^{0}\right)  ^{T}.
\]
Since we have $2^{m}$ different diagonalizing matrices $P^{T},$ we can write%
\[
W_{0}=\left\{
\begin{array}
[c]{l}%
w_{\ell}^{0}=\left\langle V_{\ell},U_{0}\right\rangle ,\text{ }\ell
\in\mathfrak{L,}\\
w_{\ell}^{0}=\left\langle \left(  -1\right)  V_{\ell},U_{0}\right\rangle
,\text{ }\ell\in\mathfrak{Z.}%
\end{array}
\right.
\]
This along with (\ref{1.6}) guarantees that the elements of $W_{0}$ are
positive, i.e.%
\[
\left.
\begin{array}
[c]{l}%
\Sigma_{\mathfrak{L},\mathfrak{Z}}=\left\{  U_{0}\in\mathbb{%
%TCIMACRO{\U{211d} }%
%BeginExpansion
\mathbb{R}
%EndExpansion
}^{m}:w_{\ell}^{0}=\left\langle V_{\ell},U_{0}\right\rangle \geq0,\text{
}\right. \\
\text{ \ \ \ \ \ \ \ \ \ \ \ \ \ \ }\left.  \ell\in\mathfrak{L,}w_{\ell}%
^{0}=\left\langle \left(  -1\right)  V_{\ell},U_{0}\right\rangle \geq0,\text{
}\ell\in\mathfrak{Z}\right\}  .
\end{array}
\right.
\]

\end{document}